\journal{}
\def\ps@pprintTitle{%
 \let\@oddhead\@empty
 \let\@evenhead\@empty
 \def\@oddfoot{}%
 \let\@evenfoot\@oddfoot}
\newcommand{\eps}{\varepsilon}
\newcommand{\N}{\mathbb{N}}
\newcommand{\R}{\mathbb{R}}
\newcommand{\Z}{\mathbb{Z}}
\newcommand{\dd}{\mathrm{d}}
\newcommand{\Ab}{\mathbb{A}}
\newcommand{\Cb}{\mathbb{C}}
\newcommand{\Db}{\mathbb{D}}
\newcommand{\Gb}{\mathbb{G}}
\newcommand{\Cc}{\mathcal{C}}
\newcommand{\Fc}{\mathcal{F}}
\newcommand{\Ic}{\mathcal{I}}
\newcommand{\Uc}{\mathcal{U}}
\newcommand{\Xc}{\mathcal{X}}
\newcommand{\Ex}{\mathbb{E}}
\newcommand{\Var}{\mathrm{Var}}
\newcommand{\1}{\mathbf{1}}
\newcommand{\ip}[1]{\lfloor #1 \rfloor}
\renewcommand{\Pr}{\mathrm{Pr}}
\newcommand{\BL}{\mathrm{BL}}
\newcommand{\p}{\overset{\Pr}{\to}}
\theoremstyle{plain}
\newtheorem{thm}{Theorem}
\newtheorem{cor}{Corollary}
\newtheorem{lem}{Lemma}
\newtheorem{cond}{Condition}
\theoremstyle{definition}
\newtheorem{remark}{Remark}
\begin{document}

\begin{frontmatter}

\title{Subsampling (weighted smooth) empirical copula processes}

\author[A1]{Ivan Kojadinovic\corref{mycorrespondingauthor}}
\author[A1,A2]{Kristina Stemikovskaya}

\address[A1]{CNRS / Universit\'e de Pau et des Pays de l'Adour / E2S UPPA, Laboratoire de math\'ematiques et applications -- IPRA, UMR 5142, B.P. 1155, 64013 Pau Cedex, France.}
\address[A2]{Universidad del País Vasco, Intelligent Systems Group, Campus de Guipuzcoa, 20018 Donostia - San Sebastian, Spain.}

\cortext[mycorrespondingauthor]{Corresponding author. Email address: \url{ivan.kojadinovic@univ-pau.fr}}

\begin{abstract}
A key tool to carry out inference on the unknown copula when modeling a continuous multivariate distribution is a nonparametric estimator known as the empirical copula. One popular way of approximating its sampling distribution consists of using the multiplier bootstrap. The latter is however characterized by a high implementation cost. Given the rank-based nature of the empirical copula, the classical empirical bootstrap of Efron does not appear to be a natural alternative, as it relies on resamples which contain ties. The aim of this work is to investigate the use of subsampling in the aforementioned framework. The latter consists of basing the inference on statistic values computed from subsamples of the initial data. One of its advantages in the rank-based context under consideration is that the formed subsamples do not contain ties. Another advantage is its asymptotic validity under minimalistic conditions. In this work, we show the asymptotic validity of subsampling for several (weighted, smooth) empirical copula processes both in the case of serially independent observations and time series. In the former case, subsampling is observed to be substantially better than the empirical bootstrap and equivalent, overall, to the multiplier bootstrap in terms of finite-sample performance.
\end{abstract}

\begin{keyword}
  Delete-$h$ jackknife \sep
  Empirical beta copula \sep
  Empirical checkerboard copula \sep
  Rank-based inference \sep
  Weighted weak convergence.
\MSC[2010] Primary 62G09 \sep 
Secondary 62G20 
\end{keyword}

\end{frontmatter}


\section{Introduction}

Let $\bm \Xc_n$ denote a stretch $\bm X_1,\dots,\bm X_n$ from a stationary time series $(\bm X_i)_{i \in \Z}$ of $d$-dimensional random vectors. The distribution function (d.f.) of each $\bm X_i$ is denoted by $F$ and is assumed to have continuous univariate margins $F_1,\dots,F_d$. By Sklar's theorem \cite{Skl59}, it is then well-known that $F$ can be expressed as
\begin{equation}
\label{eq:sklar}
F(\bm x) = C\{F_1(x_1),\dots,F_d(x_d)\}, \qquad \bm x \in \R^d,
\end{equation}
where $C$ is the unique \emph{copula} (a $d$-dimensional d.f.\ with standard uniform margins) associated with $F$.

Eq.~\eqref{eq:sklar} is at the root of the so-called \emph{copula approach} to the modeling of multivariate continuous distributions, which is increasingly applied in numerous fields such as quantitative risk management \cite{McNFreEmb15}, econometrics \cite{Pat12}, or environmental modeling \cite{SalDeMKotRos07}. Indeed, in order to obtain a parametric estimate of $F$, the decomposition in~\eqref{eq:sklar} suggests to model $F_1, \dots, F_d$ by appropriate univariate parametric d.f.s and $C$ by an adequate parametric copula family. The recent infatuation for such a two-step approach in the literature is mostly due to the fact that it has the potential of providing better estimates of the multivariate d.f.\ $F$ than if a direct classical multivariate approach were used; see, for instance, \cite{HofKojMaeYan18} and the references therein for more details.

The modeling of the univariate margins $F_1,\dots,F_d$ of $F$ can be based on classical statistical inference techniques. Inference on the unknown copula $C$ is, however, typically carried out using specific methods exploiting the two-step nature of the underlying modeling. Among the latter methods, rank-based approaches display particularly good properties \citep[see, e.g.,][]{GenFav07, HofKojMaeYan18}. One of their key ingredients is a nonparametric rank-based estimator of $C$ called the \emph{empirical copula} \citep[see, e.g.,][]{Rus76,Deh79}. In the absence of ties in the component samples of the available data $\bm \Xc_n$, it is natural to define the latter simply as the empirical d.f.\ of the multivariate ranks obtained from $\bm \Xc_n$ scaled by $1/n$. Two smooth versions, with better small-sample properties, are the \emph{empirical checkerboard copula} \citep[see, e.g.,][and the references therein]{GenNes07,GenNesRem17} and the \emph{empirical beta copula} \cite{SegSibTsu17}.

Whatever type of empirical copula is used in inference procedures on the unknown copula $C$ in~\eqref{eq:sklar}, it is almost always necessary to rely on resampling techniques to compute corresponding approximate confidence intervals or p-values. A frequently used approach is the so-called \emph{multiplier bootstrap} \citep[see, e.g.,][]{Sca05,RemSca09}. When $\bm \Xc_n$ consists of $n$ independent and identically distributed (i.i.d.) copies of $\bm X$, \citet{BucDet10} empirically found the latter resampling scheme to have better finite-sample properties than approaches consisting of adapting the empirical (multinomial) bootstrap of \citet{Efr79}. Both the i.i.d.\ version of the multiplier bootstrap and its extension to time series investigated in \cite{BucKoj16} are however characterized by a high implementation cost which may deter their use in copula inference procedures. The main aim of this work is to investigate the use of another resampling technique, known as \emph{subsampling} \cite{PolRom94b,PolRomWol99b}, to carry out inference on the unknown copula~$C$ in~\eqref{eq:sklar}.

In the case of i.i.d.\ data, subsampling consists of taking subsamples of size $b < n$ without replacement from the initial data. The statistic of interest is then recomputed for a large number of such subsamples and its sampling distribution is approximated by the empirical distribution of its subsample values. In the time series case, subsamples are restricted to consecutive observations to preserve serial dependence. Note in passing that, in the i.i.d.\ setting, subsampling is connected to the so-called delete-$h$ jackknife \cite{ShaWu89,Wu90}; see also Section 2.3 in \cite{ShaTu96} and, in particular, Remark 2.1 in \cite{PolRom94b}.

A theoretical advantage of subsampling is its asymptotic validity under minimalistic assumptions, weaker than those of the empirical bootstrap for instance; see \cite{PolRom94b,PolRomWol99,PolRomWol99b} for details. From a practical perspective, subsampling is very simple to implement, its only drawback being the necessity of choosing the subsample size $b$. In the context of copula modeling based on ranks (and, more generally, in the context of rank-based statistics), it is particularly attractive because subsamples do not contain ties unlike, for instance, resamples of size~$n$ in the case of the empirical bootstrap. The latter cannot therefore be directly used for rank-based statistics as shall be discussed, for instance, in Section~\ref{sec:MC} for (certain functionals of) the empirical copula. For this reason, subsampling appears as a simple way to obtain approximations of the sampling distributions of the empirical checkerboard and empirical beta copulas, even in a time series context.

Notice that, in the case of i.i.d.\ observations, subsamples of size $b$ could also be obtained by sampling \emph{with replacement} from $\bm \Xc_n$ \cite[see, e.g.,][]{Swa86,BicGotVan97}. The resulting resampling technique, sometimes referred to as the \emph{$b$ out of $n$ bootstrap} (and which coincides with the empirical bootstrap when $b=n$), thus forms subsamples with ties and therefore suffers from the same inconvenience as the empirical bootstrap in the rank-based context under consideration. For this reason, we shall not investigate it theoretically in this work. We shall however mention this alternative technique again when summarizing the results of our Monte Carlo experiments.

The rest of this article is organized as follows. In the second section, we introduce the main versions of the empirical copula appearing in the literature and define the corresponding empirical copula processes. The third section establishes the asymptotic validity of the subsampling methodology for the latter processes, while the fourth section states weighted versions of such results, thereby providing a first simple way to carry out inference on quantities which can be expressed as functionals of weighted empirical copula processes. The fifth section summarizes the results of Monte Carlo experiments in the i.i.d.\ and time series cases, and provides recommendations for the choice of the subsample size in the i.i.d.\ case. Finally, concluding remarks are gathered in Section~\ref{sec:conc}.

All proofs are deferred to a sequence of appendices. Additional simulation results are provided in a supplement. The following notation is used in the sequel. The arrow~`$\leadsto$' denotes weak convergence in the sense of Definition~1.3.3 in \cite{vanWel96}, and, given a set $T$, $\ell^\infty(T)$ (resp.\ $\Cc(T)$) represents the space of all bounded (resp.\ continuous) real-valued functions on $T$ equipped with the uniform metric. All convergences are for $n\to\infty$ if not mentioned otherwise.

\section{Empirical copulas and empirical copula processes}

It is well-known that the unique copula in~\eqref{eq:sklar} can be expressed \citep[see, e.g.,][]{Skl59,Rus09} as
\begin{equation}
\label{eq:C}
C(\bm u) = F \{F_1^{-} (u_1),\dots,F_d^{-}(u_d) \}, \qquad \bm u \in [0,1]^d,
\end{equation}
where, for any univariate d.f.\ $H$, $H^{-}$ denotes its associated quantile function (generalized inverse) defined by
$$
H^{-}(y) = \inf\{x \in \R : H(x) \geq y \}, \qquad y \in [0,1],
$$
with the convention that $\inf \emptyset = \infty$.

A first natural definition of the empirical copula, due to \citet{Deh79,Deh81}, stems from~\eqref{eq:C} and the plug-in principle. Let $F_n$ denote the empirical d.f.\ of $\bm \Xc_n$ and let $F_{n1},\dots,F_{nd}$ be the corresponding univariate margins. The empirical copula of $\bm \Xc_n$ is then defined by
\begin{equation}
\label{eq:Cn}
C_n(\bm u) = F_n\{F_{n1}^{-}(u_1),\dots,F_{nd}^{-}(u_d) \}, \qquad \bm u \in [0,1]^d.
\end{equation}

Another definition of the empirical copula frequently found in the literature \citep[see, e.g.,][]{GenGhoRiv95} is
\begin{equation}
\label{eq:tilde:Cn}
\tilde C_n(\bm u) = \frac{1}{n} \sum_{i=1}^n \prod_{j=1}^d \1\{F_{nj}(X_{ij}) \leq u_j\}, \qquad \bm u \in [0,1]^d.
\end{equation}

When there are no ties in the components samples of $\bm \Xc_n$, it is well-known that, for any $i \in \{1,\dots,n\}$ and any $j \in \{1,\dots,d\}$,  $n F_{nj}(X_{ij})$ is nothing else then the rank $R_{ij,n}$ of $X_{ij}$ among $X_{1j},\dots,X_{nj}$. In that case, $\tilde C_n$ coincides with the version of the empirical copula appearing in \citet{Rus76} and given by
\begin{equation}
\label{eq:hat:Cn}
\hat C_n(\bm u) = \frac{1}{n} \sum_{i=1}^n \prod_{j=1}^d \1(R_{ij,n}/n \leq u_j), \qquad \bm u \in [0,1]^d.
\end{equation}
The latter is merely the empirical d.f.\ of the sample $(R_{i1,n},\dots,R_{id,n}) / n$, $i \in \{1,\dots,n\}$, of normalized multivariate ranks.

Before proceeding further, let us formally introduce the no-ties condition.

\begin{cond}[No ties]
\label{cond:ties}
For any $j \in \{1,\dots,d\}$, the $j$th component sample $X_{1j},\dots,X_{nj}$ of $\bm \Xc_n$ does not contain ties.
\end{cond}

\begin{remark}
When the available data $\bm \Xc_n$ consist of $n$ i.i.d.\ copies of the random vector of interest $\bm X$, continuity of the marginal d.f.s $F_1,\dots,F_d$ implies that Condition~\ref{cond:ties} is satisfied. In a time series context, however, ties may occur with positive probability even if $F_1,\dots,F_d$ are continuous: as suggested in \cite{BerSeg18}, take, for instance, a Markov chain where the current state is repeated with positive probability.
\end{remark}

Under Condition~\ref{cond:ties}, classical calculations \citep[see, e.g.,][proof of Lemma~4.7]{BerBucVol17}, imply that, almost surely,
\begin{equation}
\label{eq:ae:Cn}
\sup_{\bm u \in [0,1]^d} | C_n(\bm u) - \hat C_n(\bm u) | \leq \frac{d}{n}.
\end{equation}
The relative simplicity, computation-wise, of $\hat C_n$ in~\eqref{eq:hat:Cn} over $C_n$ in~\eqref{eq:Cn} makes it the natural definition in the absence of ties. In the presence of ties, however, $\hat C_n$ is not unambiguously defined but $\tilde C_n$ in~\eqref{eq:tilde:Cn} could still be used as an alternative to $C_n$ in~\eqref{eq:Cn}. Interestingly enough, $C_n$ and $\tilde C_n$ can be shown to remain sufficiently close under a rather minimalistic condition that shall be stated towards the end of this section.

In the absence of ties, the empirical copula, whether it is defined by~\eqref{eq:Cn} or~\eqref{eq:hat:Cn}, is not, however, a genuine copula: it is for instance easy to verify that the univariate margins of $C_n$ and $\hat C_n$ are not standard uniform but only asymptotically standard uniform. In the absence of ties, two smoother alternatives to the empirical copula that are genuine copulas are the \emph{empirical checkerboard copula} and the \emph{empirical beta copula}. The empirical checkerboard copula is merely a multilinear extension of $\hat C_n$ and is defined by
\begin{equation}
\label{eq:Cn:hash}
  C_n^\#(\bm{u}) = \frac{1}{n} \sum_{i=1}^n \prod_{j=1}^d \min\{ \max\{n u_j - R_{ij,n} + 1, 0 \}, 1 \}, \quad \bm u \in [0,1]^d,
\end{equation}
see, e.g., \cite{CarTay02,SegSibTsu17}, and the references therein. It is important to note that the empirical checkerboard copula can also be defined in the presence of ties and even for discontinuous margins $F_1,\dots,F_d$; see, for instance, \cite{GenNes07,GenNesRem14,GenNesRem17}. Coming back to our context of continuous margins, it is easy to verify (see Lemma~\ref{lem:ae:Cn:hash} in Appendix~\ref{app:proofs}) that, under Condition~\ref{cond:ties}, almost surely,
\begin{equation}
\label{eq:ae:Cn:hash}
\sup_{\bm u \in [0,1]^d} | C_n^\#(\bm u) - \hat C_n(\bm u) | \leq \frac{d}{n},
\end{equation}
thereby indicating that the empirical checkerboard copula can be thought of as a smoothing of the empirical copula $\hat C_n$ at bandwidth $O(1/n)$.

The empirical beta copula, proposed by \citet{SegSibTsu17}, is obtained by replacing indicator functions in~\eqref{eq:hat:Cn} by d.f.s of particular beta distributions. Specifically, the empirical beta copula is defined by
\begin{equation}
\label{eq:Cn:beta}
  C_n^\beta(\bm{u}) = \frac{1}{n} \sum_{i=1}^n \prod_{j=1}^d \bm F_{n,R_{ij,n}}(u_j), \quad \bm{u} \in [0,1]^d,\index{$C_n^\beta$}
\end{equation}
where, for any $r \in \{1,\dots,n\}$, $\bm F_{n,r}$ denotes the d.f.\ of the beta distribution with parameters $r$ and $n+1-r$. The empirical beta copula is actually a particular case of the \emph{empirical Bernstein copula} introduced in \cite{SanSat04} and further studied in \cite{JanSwaVer12}, when the degrees of all Bernstein polynomials are set equal to the sample size. Proposition~2.8 in \cite{SegSibTsu17} shows that, under Condition~\ref{cond:ties}, the uniform distance between the empirical beta copula and $\hat C_n$ is $O(n^{-1/2} (\ln n)^{1/2})$, thereby suggesting to see the empirical beta copula as a smoothing of the empirical copula~$\hat C_n$ at approximately bandwidth $O(n^{-1/2})$; see also Corollary 3.7 in \cite{SegSibTsu17}.

The previous definitions give rise to up to five different \emph{empirical copula processes}. The two most studied ones are
\begin{equation}
\label{eq:Cbn}
\Cb_n(\bm u) = \sqrt{n} \{C_n(\bm u) - C(\bm u)\}, \qquad \bm u \in [0,1]^d,
\end{equation}
and its asymptotically equivalent version in the absence of ties given by
\begin{equation}
\label{eq:hat:Cbn}
\hat \Cb_n(\bm u) = \sqrt{n} \{\hat C_n(\bm u) - C(\bm u)\}, \qquad \bm u \in [0,1]^d.
\end{equation}
In the case of i.i.d.\ observations, their weak convergence was investigated for instance in \cite{GanStu87,FerRadWeg04,Tsu05,vanWel07,Seg12}. As we shall see below, the time series case can be elegantly handled using the work of \citet{BucVol13}.

For the sake of completeness, we also define the process
\begin{equation}
\label{eq:tilde:Cbn}
\tilde \Cb_n(\bm u) = \sqrt{n} \{\tilde C_n(\bm u) - C(\bm u)\}, \qquad \bm u \in [0,1]^d,
\end{equation}
whose study becomes of interest only in a time series context in which ties may occur.

In the absence of ties, two smoother empirical copula processes are obtained from the empirical checkerboard copula and the empirical beta copula, namely
\begin{equation}
\label{eq:Cbn:hash}
\Cb_n^\#(\bm u) = \sqrt{n} \{C_n^\#(\bm u) - C(\bm u)\}, \qquad \bm u \in [0,1]^d,
\end{equation}
and
\begin{equation}
\label{eq:Cbn:beta}
\Cb_n^\beta(\bm u) = \sqrt{n} \{C_n^\beta(\bm u) - C(\bm u)\}, \qquad \bm u \in [0,1]^d.
\end{equation}
The former was studied by \citet{GenNesRem17} in a broader context than the one considered in this work, while the latter was investigated by \citet{SegSibTsu17}.

Under the assumption of continuity of the marginals d.f.s $F_1,\dots,F_d$ made in this work, the weak convergence of the aforementioned empirical copula processes can be elegantly stated by invoking the two conditions considered in \cite{BucVol13}. The first condition concerns the weak convergence of the multivariate empirical process based on the (unobservable) sample $\bm U_1,\dots,\bm U_n$ obtained from $\bm \Xc_n$ by (marginal) probability integral transformations, where
\begin{equation}
\label{eq:Ui}
\bm U_i = (F_1(X_{i1}),\dots,F_d(X_{id})), \qquad i \in \Z.
\end{equation}
Notice that $\bm U_1,\dots,\bm U_n$ is a sample from $C$ and let $G_n$ denote its empirical d.f. The multivariate empirical process based on $\bm U_1,\dots,\bm U_n$ is then
\begin{equation}
\label{eq:Gbn}
\Gb_n (\bm u)= \sqrt{n} \{ G_n(\bm u) - C(\bm u) \}, \qquad \bm u \in [0,1]^d.
\end{equation}

\begin{cond}[Weak convergence of $\Gb_n$]
\label{cond:Gbn}
The multivariate empirical process $\Gb_n$ in~\eqref{eq:Gbn} converges weakly in $\ell^\infty([0,1]^d)$ to a tight, centered Gaussian process $\Gb_C$ concentrated on
\begin{equation}
\label{eq:Cc0}
\Cc_0 = \{ f \in \Cc([0,1]^d) : f(1,\dots,1) = 0 \text{ and } f(\bm u) = 0 \text{ if some components of $\bm u$ are equal to } 0 \}.
\end{equation}
\end{cond}


The second condition was initially introduced by \citet{Seg12} and is nonrestrictive in the sense that it is necessary for the candidate weak limit of the empirical copula processes under consideration to exist pointwise and have continuous sample paths.

\begin{cond}[Continuous partial derivatives]
\label{cond:partial:C}
For any $j \in \{1,\dots,d\}$, the $j$th partial derivative $\dot C_j(\bm u) = \partial C(\bm u)/\partial u_j$ of $C$ exists and is continuous on the set
\begin{equation}
\label{eq:Vj}
V_j = \{\bm u \in [0,1]^d : u_j \in (0,1) \}.
\end{equation}
\end{cond}

From Corollary 2.5 in \cite{BucVol13}, we then know that, under Conditions~\ref{cond:Gbn} and~\ref{cond:partial:C}, the empirical copula process $\Cb_n$ in~\eqref{eq:Cbn} converges weakly in $\ell^\infty([0,1]^d)$ to a tight, centered Gaussian process $\Cb_C$ which may be expressed in terms of $\Gb_C$ as
\begin{equation}
\label{eq:CbC}
\Cb_C(\bm u) = \Gb_C(\bm u) - \sum_{j=1}^d \dot C_j(\bm u) \Gb_C(\bm u^{(j)}), \qquad \bm u \in [0,1]^d,
\end{equation}
where, for any $j \in \{1,\dots,d\}$ and $\bm u \in [0,1]^d$, $\bm u^{(j)}$ is the vector of $[0,1]^d$ whose components are all equal to 1 except the $j$th which is equal to $u_j$, and with the convention that $\dot C_j(\bm u)$ is equal to 0 if $u_j \in \{0,1\}$.

Under Condition~\ref{cond:Gbn}, proceeding as in the proof of Lemma~4.7 in \cite{BerBucVol17} and using the asymptotic uniform equicontinuity in probability of $\Gb_n$, we immediately obtain that
\begin{equation}
\label{eq:ae:Cn:tilde}
\sup_{\bm u \in [0,1]^d} | C_n(\bm u) - \tilde C_n(\bm u) | = o_\Pr(n^{-1/2}),
\end{equation}
which, if Condition~\ref{cond:partial:C} also holds, is sufficient to conclude that $\tilde \Cb_n$ in~\eqref{eq:tilde:Cbn} converges weakly in $\ell^\infty([0,1]^d)$ to $\Cb_C$ in~\eqref{eq:CbC} as well. Assuming additionally Condition~\ref{cond:ties}, that is, the absence of ties, the same holds for $\hat \Cb_n$ (since, in that case, $\hat \Cb_n = \tilde \Cb_n$), as well as for $\Cb_n^\#$ in~\eqref{eq:Cbn:hash} and $\Cb_n^\beta$ in~\eqref{eq:Cbn:beta} by~\eqref{eq:ae:Cn:hash} and Corollary~3.7 in \cite{SegSibTsu17}, respectively. In other words, the empirical copula processes under consideration have the same weak limit under Conditions~\ref{cond:ties},~\ref{cond:Gbn} and~\ref{cond:partial:C}.

\section{Subsampling empirical copula processes}

We start by describing the considered subsampling framework before stating a theorem establishing the asymptotic validity of the subsampling methodology for the empirical copula processes introduced in the previous section.

Because we shall in part rely on the very general results of \citet{PolRomWol99}, we assume that the available sample $\bm \Xc_n$ is a stretch from a \emph{strongly mixing} stationary sequence $(\bm X_i)_{i \in \Z}$. Denote by $\Fc_j^k$ the $\sigma$-field generated by $(\bm X_i)_{j \leq i \leq k}$, $j, k \in \Z \cup \{-\infty,+\infty \}$, and recall that the strong mixing coefficients corresponding to the stationary sequence $(\bm X_i)_{i \in \Z}$ are then defined by $\alpha_0^{\bm X} = 1/2$, 
\begin{equation*}
\alpha_r^{\bm X} = \sup_{A \in \Fc_{-\infty}^0,B\in \Fc_{r}^{+\infty}} \big| \Pr(A \cap B) - \Pr(A) \Pr(B) \big|, \qquad r \in \N, \, r > 0,
\end{equation*}
and that the sequence $(\bm X_i)_{i \in \Z}$ is said to be \emph{strongly mixing} if $\alpha_r^{\bm X} \to 0$ as $r \to \infty$.

We consider two settings for the sequence $(\bm X_i)_{i \in \Z}$:
\begin{description}
\item[i.i.d.] The coefficients $\alpha_r^{\bm X}, r \geq 1$, are all equal to zero implying that the stretch $\bm \Xc_n$ consists of i.i.d.\ random vectors.
\item[s.m.] The sequence $(\bm X_i)_{i \in \Z}$ is strongly mixing but not i.i.d.\ with $\alpha^{\bm X}_r = O(r^{-a})$ as $r \to \infty$, for some $a > 1$.
\end{description}

\begin{remark}
The condition on the mixing coefficients stated in the s.m.\ setting is among the weakest possible ones and, from Theorem~1 of \citet{Buc15}, implies that Condition~\ref{cond:Gbn} is then satisfied. Note that Condition~\ref{cond:Gbn} also obviously holds under the i.i.d.\ setting as a consequence of Donsker's theorem.
\end{remark}

Let $b < n$ denote the size of subsamples which will be obtained from $\bm \Xc_n$. Under the i.i.d.\ setting, subsamples can be formed simply by sampling without replacement from~$\bm \Xc_n$. The number of possible subsamples is thus $N_{b,n} = \binom{n}{b}$. Following \citet{PolRom94b}, the subsampling methodology, say for $\Cb_n$ in~\eqref{eq:Cbn}, consists of, first, evaluating a computable version of $\Cb_n$ for a large number of the $N_{b,n}$ subsamples $\bm \Xc_b^{[m]}$, $m \in \{1,\dots,N_{b,n}\}$, of size $b$ obtained by sampling without replacement from~$\bm \Xc_n$ (considering all of the $N_{b,n}$ subsamples is typically infeasible in practice), and, second, of carrying out the inference on~$C$ using these \emph{subsample replicates} of $\Cb_n$. The latter are naturally formed as follows. For $m \in \{1,\dots,N_{b,n}\}$, let $F_b^{[m]}$ be the empirical d.f.\ of the sample $\bm \Xc_b^{[m]}$, let $F_{b1}^{[m]},\dots,F_{bd}^{[m]}$ be the univariate margins of $F_b^{[m]}$, and let
$$
C_b^{[m]}(\bm u) = F_b^{[m]}\{F_{b1}^{[m],-}(u_1),\dots,F_{bd}^{[m],-}(u_d) \}, \qquad \bm u \in [0,1]^d,
$$
be the empirical copula of $\bm \Xc_b^{[m]}$. The subsample replicates of the empirical copula process $\Cb_n$ are then defined by
\begin{equation}
\label{eq:Cbbm}
\Cb_b^{[m]}(\bm u) = \sqrt{b} \{C_b^{[m]}(\bm u) - C_n(\bm u)\}, \qquad \bm u \in [0,1]^d, \, m \in \{1,\dots,N_{b,n}\}.
\end{equation}
Hence, a replicate of $\Cb_n$ for a subsample $\bm \Xc_b^{[m]}$ coincides with $\Cb_b$ on $\bm \Xc_b^{[m]}$ up to the centering term which is $C_n$ instead of the unknown copula $C$.

\begin{remark}
  \label{rem:finite:pop:cor}
Both for theoretical and practical reasons, it is often meaningful to correct subsample replicates by multiplying them by the \emph{finite population correction} $(1- b/n)^{-1/2}$~; see \citet[Section~10.3.1]{PolRomWol99b} and \citet[Section~2.3.1]{ShaTu96}. This factor arises from the analysis of subsampling for the mean: in the case of univariate i.i.d.\ observations $X_1,\dots,X_n$, the variance of uncorrected subsample replicates can be verified to be $(1-b/n)$ times the variance of $\sqrt{n} \{\bar X_n - \Ex(X)\}$, where $\bar X_n = n^{-1} \sum_{i=1}^n X_i$. The discussion in \citet[Chapter~10]{PolRomWol99b} in the case of the mean suggests that the use of the finite population correction may always be beneficial in finite samples. 
\end{remark}

Coming back to our setting, given a subsample replicate $\Cb_b^{[m]}$ of $\Cb_n$, we define its {\em corrected} version to be
\begin{equation}
\label{eq:Cbbm:corrected}
\Cb_{b,c}^{[m]} = (1-b/n)^{-1/2} \, \Cb_b^{[m]}.
\end{equation}
To fix ideas further, assume that we are interested in estimating a linear functional $\psi(C)$ of the unknown copula $C$ (such as Spearman's rho for instance). An approximate confidence interval of expected asymptotic level $1-\alpha$ for $\psi(C)$ based on subsampling $\Cb_n$ is then given by
$$
\Ic_{n,N_{b,n},\alpha} = \Big[ \psi(C_n) - n^{-1/2} \Fc_{N_{b,n}}^{-}(1-\alpha/2), \psi(C_n) - n^{-1/2} \Fc_{N_{b,n}}^{-}(\alpha/2) \Big], \qquad \alpha \in (0,1/2),
$$
where $\Fc_{N_{b,n}}$ is the empirical d.f.\ of the sample of the $N_{b,n}$ (corrected) subsample replicates $\psi(\Cb_{b,c}^{[m]})$, $m \in \{1,\dots,N_{b,n}\}$, of $\psi(\Cb_n)$. The interval $\Ic_{n,N_{b,n},\alpha}$ is nothing else than the subsampling version of the so-called \emph{basic bootstrap confidence interval} \citep[see, e.g.,][Chapter~5]{DavHin97}. Since, as already mentioned, $N_{b,n}$ is typically too large for $\Fc_{N_{b,n}}$ to be evaluated, one generally needs to rely on a stochastic approximation. The latter typically consists of choosing independently $M$ integers $I_{1,n},\dots,I_{M,n}$ with replacement from $\{1,\dots,N_{b,n}\}$ and proceeding as previously using the subsample replicates $\psi(\Cb_{b,c}^{[I_{m,n}]})$, $m \in \{1,\dots,M\}$. 

Under the s.m.\ setting, the only difference is that the approach has to be restricted to subsamples of size $b$ consisting of consecutive observations so that the serial dependence appearing in $\bm \Xc_n$ is partly preserved. Hence, in that case, the number of possible subsamples is $N_{b,n} = n - b +1 $ and a computable version of the empirical copula process of interest has to be evaluated on subsamples $\bm \Xc_b^{[m]}$ of the form $\bm X_m,\dots,\bm X_{m+b-1}$, $m \in \{1,\dots,N_{b,n}\}$.

The asymptotic validity of the subsampling methodology for the empirical copula processes introduced in the previous section is established under conditions which are very close to the weakest possible ones necessary for the weak convergence of these processes. These rather minimalistic conditions are not surprising as explained in Remark~\ref{rem:simple} below.  For any $m \in \{1,\dots,N_{b,n}\}$, let $\tilde C_b^{[m]}$, $\hat C_b^{[m]}$, $C_b^{\#,[m]}$ and $\hat C_b^{\beta,[m]}$ be the versions of $\tilde C_n$ in~\eqref{eq:tilde:Cn}, $\hat C_n$ in~\eqref{eq:hat:Cn}, $C_n^\#$ in~\eqref{eq:Cn:hash} and $C_n^\beta$ in~\eqref{eq:Cn:beta}, respectively, computed from the subsample $\bm \Xc_b^{[m]}$. The subsamples replicates of $\tilde \Cb_n$, $\hat \Cb_n$, $\Cb_n^\#$ and $\Cb_n^\beta$ are then respectively defined by
\begin{align}
\label{eq:tilde:Cbbm}
\tilde \Cb_b^{[m]}(\bm u) &= \sqrt{b} \{\tilde C_b^{[m]}(\bm u) - \tilde C_n(\bm u)\}, \\
\label{eq:hat:Cbbm}
\hat \Cb_b^{[m]}(\bm u) &= \sqrt{b} \{\hat C_b^{[m]}(\bm u) - \hat C_n(\bm u)\}, \\
\label{eq:Cbbm:hash}
\Cb_b^{\#,[m]}(\bm u) &= \sqrt{b} \{C_b^{\#,[m]}(\bm u) - C_n^\#(\bm u)\}, \\
\label{eq:Cbbm:beta}
\Cb_b^{\beta,[m]}(\bm u) &= \sqrt{b} \{C_b^{\beta,[m]}(\bm u) - C_n^\beta(\bm u)\},
\end{align}
for $\bm u \in [0,1]^d$ and $m \in \{1,\dots,N_{b,n}\}$, and their corrected versions $\tilde \Cb_{b,c}^{[m]}$, $\hat \Cb_{b,c}^{[m]}$, $\Cb_{b,c}^{\#,[m]}$ and $\Cb_{b,c}^{\beta,[m]}$, respectively, are defined analogously to~\eqref{eq:Cbbm:corrected}. The following theorem is proven in Appendix~\ref{app:proofs}.

\begin{thm}[Subsampling empirical copula processes]
\label{thm:sub:Cbn}
Assume that Condition~\ref{cond:partial:C} holds and that $b = b_n \to \infty$. Also, let $I_{1,n}$ and $I_{2,n}$ be independent random variables, independent of $\Xc_n$ and uniformly distributed on the set $\{1,\dots,N_{b,n}\}$.
\begin{enumerate}[(i)]
\item Under the i.i.d.\ setting, if $b/n \to \alpha \in [0,1)$, then
\begin{align}
\label{eq:sub:Cbn:fsc}
(\Cb_n, \Cb_{b,c}^{[I_{1,n}]}, \Cb_{b,c}^{[I_{2,n}]}) &\leadsto (\Cb_C,\Cb_C^{[1]},\Cb_C^{[2]}), \\
\label{eq:sub:hat:Cbn:fsc}
  (\hat \Cb_n, \hat \Cb_{b,c}^{[I_{1,n}]}, \hat \Cb_{b,c}^{[I_{2,n}]}) &\leadsto (\Cb_C,\Cb_C^{[1]},\Cb_C^{[2]}), \\
\label{eq:sub:Cbn:hash:fsc}
(\Cb_n^\#, \Cb_{b,c}^{\#,[I_{1,n}]}, \Cb_{b,c}^{\#,[I_{2,n}]}) &\leadsto (\Cb_C,\Cb_C^{[1]},\Cb_C^{[2]}), \\
\label{eq:sub:Cbn:beta:fsc}
(\Cb_n^\beta, \Cb_{b,c}^{\beta,[I_{1,n}]}, \Cb_{b,c}^{\beta,[I_{2,n}]}) &\leadsto (\Cb_C,\Cb_C^{[1]},\Cb_C^{[2]}),
\end{align}
in $\{ \ell^\infty([0,1]^d) \}^3$, where $\Cb_C^{[1]}$ and $\Cb_C^{[2]}$ are independent copies of $\Cb_C$ in~\eqref{eq:CbC}.

\item Under the s.m.\ setting, if $b/n \to 0$,
\begin{align}
\label{eq:sub:Cbn}
(\Cb_n,\Cb_b^{[I_{1,n}]},\Cb_b^{[I_{2,n}]}) &\leadsto (\Cb_C,\Cb_C^{[1]},\Cb_C^{[2]}), \\
\label{eq:sub:tilde:Cbn}
(\tilde \Cb_n, \tilde \Cb_b^{[I_{1,n}]}, \tilde \Cb_b^{[I_{2,n}]}) &\leadsto (\Cb_C,\Cb_C^{[1]},\Cb_C^{[2]}),
\end{align}
in $\{ \ell^\infty([0,1]^d) \}^3$. 
If Condition~\ref{cond:ties} additionally holds,
\begin{align}
\label{eq:sub:hat:Cbn}
(\hat \Cb_n, \hat \Cb_b^{[I_{1,n}]}, \hat \Cb_b^{[I_{2,n}]}) &\leadsto (\Cb_C,\Cb_C^{[1]},\Cb_C^{[2]}), \\
\label{eq:sub:Cbn:hash}
(\Cb_n^\#,\Cb_b^{\#,[I_{1,n}]},\Cb_b^{\#,[I_{2,n}]}) &\leadsto (\Cb_C,\Cb_C^{[1]},\Cb_C^{[2]}), \\
\label{eq:sub:Cbn:beta}
(\Cb_n^\beta,\Cb_b^{\beta,[I_{1,n}]},\Cb_b^{\beta,[I_{2,n}]}) &\leadsto (\Cb_C,\Cb_C^{[1]},\Cb_C^{[2]}),
\end{align}
in $\{ \ell^\infty([0,1]^d) \}^3$.
\end{enumerate}
\end{thm}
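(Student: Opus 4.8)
\emph{Reduction.} The plan is to prove the statement for a single representative process and to transfer it to the others, routing everything through the multivariate empirical process $\Gb_n$ of~\eqref{eq:Gbn}. Because the empirical copula and its smooth variants are rank-based, they are invariant under strictly increasing marginal transformations; I would therefore assume without loss of generality that the data coincide with the unobservable uniform sample, i.e.\ $\bm X_i=\bm U_i$ in~\eqref{eq:Ui}, so that $F_n=G_n$ and $\Gb_n$ becomes observable. The five processes are pairwise asymptotically equivalent in $\ell^\infty([0,1]^d)$ by~\eqref{eq:ae:Cn}, \eqref{eq:ae:Cn:hash}, \eqref{eq:ae:Cn:tilde} and Corollary~3.7 in \cite{SegSibTsu17}, and the same equivalences hold at the subsample scale $b$: the bounds $d/b$ multiplied by $\sqrt{b}$ still vanish, while the $o_\Pr(b^{-1/2})$ estimate and the beta-copula estimate are the subsample analogues of~\eqref{eq:ae:Cn:tilde} and of Corollary~3.7. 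Hence it suffices to treat $\Cb_n$ together with its replicates~\eqref{eq:Cbbm}, \eqref{eq:Cbbm:corrected}; the split in the theorem between processes that require Condition~\ref{cond:ties} and those that do not is then dictated precisely by which equivalences are available.

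\emph{Linearisation and continuous mapping.} The copula map sending a distribution function to its empirical-copula functional is Hadamard differentiable at $C$ tangentially to $\Cc_0$ in~\eqref{eq:Cc0}, with bounded linear derivative $\Phi(f)(\bm u)=f(\bm u)-\sum_{j=1}^d\dot C_j(\bm u)\,f(\bm u^{(j)})$, exactly the map appearing in~\eqref{eq:CbC} (Corollary~2.5 in \cite{BucVol13}). This gives $\Cb_n=\Phi(\Gb_n)+o_\Pr(1)$ and, applying the same linearisation inside each subsample, $\Cb_b^{[m]}=\Phi\big(\sqrt{b}\,(G_b^{[m]}-G_n)\big)+o_\Pr(1)$, where $G_b^{[m]}$ is the subsample empirical distribution function. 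Since $\Phi$ is a fixed bounded linear, hence continuous, operator on $\ell^\infty([0,1]^d)$, the continuous mapping theorem reduces the whole statement to the corresponding joint convergence for the empirical process,
\begin{equation*}
\Big(\Gb_n,\ (1-b/n)^{-1/2}\sqrt{b}\,\big(G_b^{[I_{1,n}]}-G_n\big),\ (1-b/n)^{-1/2}\sqrt{b}\,\big(G_b^{[I_{2,n}]}-G_n\big)\Big)\leadsto\big(\Gb_C,\Gb_C^{[1]},\Gb_C^{[2]}\big),
\end{equation*}
with $\Gb_C^{[1]},\Gb_C^{[2]}$ independent copies of $\Gb_C$, the correction being dropped in the s.m.\ case.

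\emph{Core: subsampling validity for $\Gb_n$.} This is the heart of the argument and where the two settings diverge. Under the i.i.d.\ setting I would invoke the general subsampling theory of \citet{PolRom94b}, and under the s.m.\ setting that of \citet{PolRomWol99}, in each case in an $\ell^\infty$-valued form, the required convergence $\Gb_n\leadsto\Gb_C$ being supplied by Condition~\ref{cond:Gbn} (Donsker's theorem in the i.i.d.\ case, Theorem~1 of \citet{Buc15} in the s.m.\ case). Reducing to finite collections of evaluation points together with asymptotic tightness, I would establish two facts for each such functional. First, a \emph{consistency} statement: a single randomly chosen subsample is itself a genuine sample of size $b$ --- drawn without replacement from $\bm\Xc_n$ in the i.i.d.\ case, a stationary stretch of length $b$ in the s.m.\ case --- so that $\sqrt{b}\,(G_b^{[I_{1,n}]}-C)\leadsto\Gb_C$; the centring enters only through $\sqrt{b}\,(G_n-C)=\sqrt{b/n}\,\Gb_n$, which is negligible when $b/n\to0$ (s.m.) and whose residual variance is absorbed by the finite population correction $(1-b/n)^{-1/2}$ when $b/n\to\alpha$ (i.i.d.). Second, an \emph{asymptotic-independence} statement: the covariance between the two replicates indexed by the independent draws $I_{1,n},I_{2,n}$ vanishes, so the pair converges to two independent copies of $\Gb_C$.

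\emph{Main obstacle.} I expect the asymptotic independence of the two replicates to be the crux. In the s.m.\ setting it rests on a mixing decoupling argument: two blocks of length $b$ with uniformly chosen starting points are, with probability tending to one, separated by a gap far exceeding the mixing range, and the rare overlapping or near-adjacent pairs are controlled using $\alpha_r^{\bm X}=O(r^{-a})$ with $a>1$ and $b/n\to0$. In the i.i.d.\ setting the difficulty is of a different nature: since $b/n\to\alpha$ may be positive, two subsamples overlap in about $\alpha b$ observations and are genuinely dependent for fixed data, and independence is recovered only after averaging over the uniform choice of subsamples; it is exactly here --- as in the prototypical mean computation recalled in Remark~\ref{rem:finite:pop:cor} --- that the finite population correction is indispensable, both to calibrate each replicate's marginal variance and to annihilate the cross-covariance in the limit. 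The remaining technical points are to carry out these moment computations uniformly enough to yield the tight functional conclusion, and to check that the Hadamard linearisation remainders are $o_\Pr(1)$ jointly for the original statistic and for the two randomly selected replicates.
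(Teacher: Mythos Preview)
Your overall architecture --- reduce all five processes to one via the uniform approximations~\eqref{eq:ae:Cn}, \eqref{eq:ae:Cn:hash}, \eqref{eq:ae:Cn:tilde} and Corollary~3.7 in \cite{SegSibTsu17}, linearise the copula map via the Hadamard derivative $\Phi'_C$, and then push everything back to a subsampling statement for $\Gb_n$ --- matches the paper exactly. The difference lies in how the core step for $\Gb_n$ is obtained, and there your sketch has one concrete weakness.

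In the i.i.d.\ case with $b/n\to\alpha\in(0,1)$, the reference \cite{PolRom94b} is not the right tool: that theory assumes $b/n\to 0$ and does not by itself deliver the joint weak convergence with two independent limit copies when subsamples overlap non-negligibly. The paper does not attempt the direct moment/variance computation you outline; instead it invokes Theorem~2.2 and Example~3.6 of \citet{PraWel93} (the exchangeable-bootstrap/delete-$h$-jackknife theorem for the general empirical process), which already gives that $(1-b/n)^{-1/2}\Gb_b^{[I_{1,n}]}$ converges weakly to $\Gb_C$ conditionally on $\bm\Xc_n$ in probability, for any $\alpha\in[0,1)$. That conditional statement is then converted into the unconditional joint form $(\Gb_n,\Gb_b^{[I_{1,n}]},\Gb_b^{[I_{2,n}]})\leadsto(\Gb_C,\Gb_C^{[1]},\Gb_C^{[2]})$ by Lemma~3.1 of \cite{BucKoj18}; the asymptotic independence of the two replicates is thus obtained for free from this equivalence, not from a separate covariance computation. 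The ``moment computations uniformly enough to yield the tight functional conclusion'' that you flag as the crux would amount to reproving the Pr{\ae}stgaard--Wellner result, which is substantial. For the s.m.\ part the paper likewise avoids a bespoke mixing decoupling: it applies Theorem~4.1 of \cite{PolRomWol99} to $\Gb_n$ in $(D([0,1]^d),d_S)$ (a separable complete space where the bounded Lipschitz metric metrizes weak convergence), and again uses the \cite{BucKoj18} lemmas to pass to the joint-with-two-replicates formulation. In short, your plan is sound but underestimates how much is already packaged in \cite{PraWel93} and \cite{BucKoj18}; routing through those references is what makes the paper's proof short.
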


\begin{remark}
Choosing $b$ such that $b/n \to 0$ as in (ii) is the usual assumption found in asymptotic validity results for the subsampling methodology; see \cite{PolRom94b,PolRomWol99,PolRomWol99b}. Imposing that $b/n \to \alpha \in (0,1)$ seems only possible in the i.i.d.\ setting in ``mean-like'' situations such as those considered in \cite{Wu90,PraWel93}; see also Remark~2.2.3 in \cite{PolRomWol99b}. In the case of serially dependent observations, \citet{Lah01} showed that, in the case of the sample mean, the condition $b/n \to 0$ is necessary; see also Remark~3.2.2 in \cite{PolRomWol99b}.
\end{remark}

\begin{remark}
  \label{rem:simple}
  Apart from the assumptions related to the subsampling methodology, no additional conditions than those necessary for the weak convergence of the empirical copula processes are involved in the theorem. For $b/n \to 0$, this is not surprising in view of the general results obtained in \cite{PolRom94b,PolRomWol99} which state that the subsampling methodology is asymptotically valid under minimalistic conditions, weaker than those necessary for the asymptotic validity of the empirical bootstrap, for example. As far as the empirical copula process $\hat \Cb_n$ is concerned for instance, the assumptions of Theorem~\ref{thm:sub:Cbn} are indeed weaker than, for example, those of Proposition~4.2 in \cite{BucKoj16} on the dependent multiplier bootstrap for $\hat \Cb_n$.
\end{remark}

\begin{remark}
The proof of (i) relies in part on the key results of \citet{PraWel93} on the exchangeable bootstrap for the general empirical process; see also \citet[Section~3.6]{vanWel96}. The latter are specialized therein to obtain the asymptotic validity of the delete-$h$ jackknife \cite{ShaWu89,Wu90} for the general empirical process. The proof of (ii) is essentially a consequence of the general result on subsampling stated in Theorem 3.1 in \cite{PolRomWol99}.
\end{remark}

\begin{remark}
  All the weak convergences stated in~(i) involve corrected subsample replicates obtained, as explained in Remark~\ref{rem:finite:pop:cor}, by multiplying the initial replicates by the finite population correction $(1 - b/n)^{-1/2}$~; see, for instance,~\eqref{eq:Cbbm:corrected}. The finite population correction is not needed in the weak convergences stated in (ii) because $(1 - b/n)^{-1/2}$ tends to~1 under the assumption that $b/n \to 0$. It cannot, however, be dispensed with under the i.i.d.\ setting if $b / n \to \alpha \in (0,1)$; see also \citet[Example~3.6]{PraWel93}. From a practical perspective, as already mentioned in Remark~\ref{rem:finite:pop:cor}, the discussion in \citet[Chapter~10]{PolRomWol99b} (in the case of the mean) suggests that the use of the finite population correction may always be beneficial in finite samples. Our Monte Carlo experiments for empirical copula processes, whose results are partly reported in Section~\ref{sec:MC}, essentially corroborate that claim.
\end{remark}

\begin{remark}
The convergence results stated in (i) and (ii) establish the asymptotic validity of the subsampling methodology for the empirical copula processes of interest by stating their weak convergence jointly with two subsample replicates. By Lemma~3.1 in \cite{BucKoj18}, these unconditional asymptotic validity results are equivalent to more classical conditional results which rely, however, on a more subtle mode of convergence. For instance,~\eqref{eq:sub:Cbn:fsc} can be equivalently informally stated as ``$\Cb_{b,c}^{[I_{1,n}]}$ converges weakly to $\Cb_C$ in $\ell^\infty([0,1]^d)$ conditionally on $\bm \Xc_n$ in probability''; see, e.g., \cite[Section~2.2.3]{Kos08} or \cite{BucKoj18} for a precise definition of that mode of convergence in terms of an appropriate version of the bounded Lipschitz metric.
\end{remark}

\section{Subsampling weighted empirical copula processes}

Empirical copula processes were recently shown to converge weakly also with respect to stronger metrics than the supremum distance. A first seminal result in that direction is due to \citet{BerBucVol17} for the empirical copula processes $\Cb_n$ and $\tilde \Cb_n$. \citet{BerSeg18} have shown a similar result for the empirical beta copula process $\Cb_n^\beta$. Because the latter involves the empirical beta copula $C_n^\beta$, which is a genuine copula, its statement is simpler and takes the form of a weighted weak convergence in $\ell^\infty([0,1]^d)$ (that is, with respect to the uniform metric). The weight function considered in the aforementioned references is
\begin{equation}
\label{eq:g}
g(\bm u) = \bigwedge_{j=1}^d \left\{u_j \wedge \bigvee_{k = 1 \atop k \neq j}^d (1-u_k) \right\}, \qquad \bm u \in [0,1]^d,
\end{equation}
where $\wedge$ and $\vee$ denote the minimum and maximum operators, respectively. The corresponding weighted weak convergence results were proven under the two following additional conditions.

\begin{cond}[Exponential mixing]
\label{cond:mixing}
The sequence $(\bm X_i)_{i \in \Z}$ is strongly mixing with $\alpha_r^{\bm X} = O(a^r)$ as $r \to \infty$, for some $a \in (0,1)$.
\end{cond}

\begin{cond}[Smooth second-order partial derivatives]
\label{cond:partial2:C}
For any $j_1,j_2 \in \{1,\dots,d\}$, the second-order partial derivative $\ddot C_{j_1j_2}(\bm u) = \partial^2 C(\bm u)/\partial u_{j_1}\partial u_{j_2}$ of $C$ exists and is continuous on the set $V_{j_1} \cap V_{j_2}$, where $V_j$ is defined by~\eqref{eq:Vj}. Moreover, there exists a constant $\kappa > 0$ such that
$$
|\ddot C_{j_1j_2}(\bm u)| \leq \kappa \min \left\{\frac{1}{u_{j_1}(1-u_{j_1})}, \frac{1}{u_{j_2}(1-u_{j_2})}\right\}, \qquad \forall \, \bm u \in V_{j_1} \cap V_{j_2}.
$$
\end{cond}

Note that Condition~\ref{cond:partial2:C} first appeared in \cite{Seg12} where it was used to prove the almost sure representation for $\Cb_n$ originally conjectured in \cite{Stu84}. As discussed in \cite{BerSeg18}, this condition is satisfied for several commonly occurring copulas.

Theorem~2 in \cite{BerSeg18} then states that, under Conditions~\ref{cond:ties},~\ref{cond:partial:C}, \ref{cond:mixing} and~\ref{cond:partial2:C}, for any $\omega \in [0,1/2)$, the weighted empirical beta copula process $\Cb_n^\beta/g^\omega$ converges weakly in $\ell^\infty([0,1]^d)$ to  $\Cb_C/g^\omega$, where $\Cb_C$ and $g$ are given in~\eqref{eq:CbC} and~\eqref{eq:g}, respectively. In the previous statement, since the zero-set of $\Cb_n^\beta$ includes the zero-set of~$g$, $\Cb_n^\beta/g^\omega$ is taken to be zero as soon as $g = 0$ by convention. The previous result relies in part on Theorem~2.2 in \cite{BerBucVol17} which provides a similar weighted weak convergence for $\Cb_n$ in~\eqref{eq:Cbn} and $\tilde \Cb_n$ in~\eqref{eq:tilde:Cbn}, but only on the interior of the unit hypercube since $\Cb_n/g^\omega$ and $\tilde \Cb_n/g^\omega$ are not bounded on the whole of $[0,1]^d$.

The aforementioned weighted weak convergence results allow us to prove the asymptotic validity of the subsampling methodology for the empirical copula processes considered in this work weighted by $1/g^\omega$. To be able to state the results for the processes $\Cb_n$ and $\tilde \Cb_n$, we need to introduce some additional notation first. Let
\begin{equation}
\label{eq:bar:Cbn}
\bar \Cb_n(\bm u) = \Gb_n(\bm u) - \sum_{j=1}^d \dot C_j(\bm u) \Gb_n(\bm u^{(j)}), \qquad \bm u \in [0,1]^d,
\end{equation}
where $\Gb_n$ is defined in~\eqref{eq:Gbn} and, for any $m \in \{1,\dots,N_{b,n}\}$, let
$$
\bar \Cb_b^{[m]}(\bm u) = \Gb_b^{[m]}(\bm u) - \sum_{j=1}^d \dot C_j(\bm u) \Gb_b^{[m]}(\bm u^{(j)}), \qquad \bm u \in [0,1]^d,
$$
where
\begin{equation}
\label{eq:Gbm}
\Gb_b^{[m]} = \sqrt{b} (G_b^{[m]} - G_n),
\end{equation}
and $G_b^{[m]}$ is the empirical d.f.\ of the subsample of size $b$ obtained from $\bm \Xc_b^{[m]}$ by (marginal) probability integral transformations; see~\eqref{eq:Ui}. The two following theorems are proven in Appendix~\ref{app:proofs}.

\begin{thm}[Subsampling the processes $\Cb_n / g^\omega$ and $\tilde \Cb_n / g^\omega$]
\label{thm:sub:Cbn:weighted}
Assume that Conditions~\ref{cond:partial:C} and~\ref{cond:partial2:C} hold, and that $b = b_n \to \infty$. Also, let $I_{1,n}$ and $I_{2,n}$ be independent random variables, independent of $\Xc_n$ and uniformly distributed on the set $\{1,\dots,N_{b,n}\}$.
\begin{enumerate}[(i)]
\item Under the i.i.d.\ setting, if $b/n \to \alpha \in [0,1)$, then, for any $\omega \in [0,1/2)$,
\begin{equation*}
(\bar \Cb_n / g^\omega, (1 - b/n)^{-1/2} \bar \Cb_b^{[I_{1,n}]} / g^\omega, (1 - b/n)^{-1/2}\bar \Cb_b^{[I_{2,n}]} / g^\omega) \leadsto (\Cb_C / g^\omega, \Cb_C^{[1]} / g^\omega,\Cb_C^{[2]} / g^\omega),
\end{equation*}
in $\{ \ell^\infty([0,1]^d) \}^3$, where $\Cb_C^{[1]}$ and $\Cb_C^{[2]}$ are independent copies of $\Cb_C$ in~\eqref{eq:CbC}.

\item Under the s.m.\ setting and Condition~\ref{cond:mixing}, if $b/n \to 0$, then, for any $\omega \in [0,1/2)$,
\begin{equation*}
(\bar \Cb_n / g^\omega, \bar \Cb_b^{[I_{1,n}]} / g^\omega, \bar \Cb_b^{[I_{2,n}]} / g^\omega) \leadsto (\Cb_C / g^\omega, \Cb_C^{[1]} / g^\omega, \Cb_C^{[2]} / g^\omega),
\end{equation*}
in $\{ \ell^\infty([0,1]^d) \}^3$. 
\end{enumerate}
Furthermore, under the assumptions in (i) or (ii), for any $c \in (0,1)$ and any $\omega \in [0,1/2)$,
\begin{align}
\label{eq:weighted:ae1}
\sup_{\bm u \in [0,1]^d \atop g(\bm u) \geq c/n} \left| \frac{\Cb_n(\bm u)}{\{ g(\bm u) \}^\omega} - \frac{\bar \Cb_n(\bm u)}{\{ g(\bm u) \}^\omega} \right| &= o_\Pr(1), \\
\label{eq:weighted:ae2}
\sup_{\bm u \in [0,1]^d \atop g(\bm u) \geq c/b} \left| \frac{\Cb_b^{[I_{m,n}]}(\bm u)}{\{ g(\bm u) \}^\omega} - \frac{\bar \Cb_b^{[I_{m,n}]}(\bm u)}{\{ g(\bm u) \}^\omega} \right| &= o_\Pr(1), \qquad m \in \{1,2\},
\end{align}
and
\begin{align}
\label{eq:weighted:tilde:ae1}
\sup_{\bm u \in [0,1]^d \atop g(\bm u) \geq c/n} \left| \frac{\tilde \Cb_n(\bm u)}{\{ g(\bm u) \}^\omega} - \frac{\bar \Cb_n(\bm u)}{\{ g(\bm u) \}^\omega} \right| &= o_\Pr(1), \\
\label{eq:weighted:tilde:ae2}
\sup_{\bm u \in [0,1]^d \atop g(\bm u) \geq c/b} \left| \frac{\tilde \Cb_b^{[I_{m,n}]}(\bm u)}{\{ g(\bm u) \}^\omega} - \frac{\bar \Cb_b^{[I_{m,n}]}(\bm u)}{\{ g(\bm u) \}^\omega} \right| &= o_\Pr(1), \qquad m \in \{1,2\},
\end{align}
where $\Cb_n$, $\tilde \Cb_n$, $\Cb_b^{[m]}$, $\tilde \Cb_b^{[m]}$, $m \in \{1,\dots,N_{b,n}\}$, are defined in~\eqref{eq:Cbn},~\eqref{eq:tilde:Cbn},~\eqref{eq:Cbbm} and~\eqref{eq:tilde:Cbbm}, respectively.
\end{thm}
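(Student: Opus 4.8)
The plan is to reduce both the weak convergences in $(i)$--$(ii)$ and the linearization bounds~\eqref{eq:weighted:ae1}--\eqref{eq:weighted:tilde:ae2} to two inputs: the joint convergence of the underlying (unweighted) empirical processes $\Gb_n$ and their subsample replicates $\Gb_b^{[m]}$, already obtained in the course of proving Theorem~\ref{thm:sub:Cbn}, and the weighted linearization machinery of \citet{BerBucVol17} and \citet{BerSeg18}. Throughout I would write $\Phi(f)(\bm u) = f(\bm u) - \sum_{j=1}^d \dot C_j(\bm u) f(\bm u^{(j)})$, so that $\bar\Cb_n = \Phi(\Gb_n)$, $\bar\Cb_b^{[m]} = \Phi(\Gb_b^{[m]})$ and $\Cb_C = \Phi(\Gb_C)$, recalling from~\eqref{eq:Gbm} that $\Gb_b^{[m]} = \sqrt b\,(G_b^{[m]} - G_n)$ is centred at the full-sample empirical d.f.

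I would treat the ``furthermore'' part first, since the weak convergences build on it. Under Conditions~\ref{cond:partial:C} and~\ref{cond:partial2:C} (and, in the s.m.\ case, Condition~\ref{cond:mixing}), Theorem~2.2 in \cite{BerBucVol17} applied to the full sample yields~\eqref{eq:weighted:ae1} and~\eqref{eq:weighted:tilde:ae1} directly, as these are precisely the remainder bounds underlying that theorem for $\Cb_n$ and $\tilde\Cb_n$ on the interior. For the subsample bounds~\eqref{eq:weighted:ae2} and~\eqref{eq:weighted:tilde:ae2}, I would record the decomposition
\[
\Cb_b^{[m]} - \bar\Cb_b^{[m]} = \big(\Cb_b^{[m],\star} - \bar\Cb_b^{[m],\star}\big) - (b/n)^{1/2}\big(\Cb_n - \bar\Cb_n\big),
\]
where a star denotes recentring at the true copula $C$ (so $\Cb_b^{[m],\star} = \sqrt b\,(C_b^{[m]} - C)$ and $\bar\Cb_b^{[m],\star} = \Phi(\sqrt b\,(G_b^{[m]} - C))$), using the linearity of $\Phi$ and $\sqrt b\,(G_n - C) = (b/n)^{1/2}\Gb_n$. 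On $\{g \ge c/b\} \subseteq \{g \ge c/n\}$ the second term is, up to the factor $(b/n)^{1/2} \le 1$, dominated by the left-hand side of~\eqref{eq:weighted:ae1}, hence $o_\Pr(1)$; the first term is exactly~\eqref{eq:weighted:ae1} applied to the subsample of size $b$, so for each fixed $m$ it is $o_\Pr(1)$ by \cite{BerBucVol17}, each subsample being itself a stretch of length $b$ from the (i.i.d.\ or mixing) sequence. To pass to the random index $I_{m,n}$, which is uniform on $\{1,\dots,N_{b,n}\}$ and independent of $\bm\Xc_n$, I would use, writing $D_{b,k}$ for the first term's weighted supremum over subsample $k$,
\[
\Pr\big(D_{b,I_{m,n}} > \eps\big) = \tfrac{1}{N_{b,n}}\sum_{k=1}^{N_{b,n}} \Pr\big(D_{b,k} > \eps\big) = \Pr\big(D_{b,1} > \eps\big) \to 0,
\]
the middle equality resting on stationarity (resp.\ the identical distribution of the subsamples in the i.i.d.\ case). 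The $\tilde\Cb$-versions are handled identically, with~\eqref{eq:weighted:tilde:ae1} in place of~\eqref{eq:weighted:ae1} and the common linearization $\bar\Cb_b^{[m],\star}$.

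For $(i)$ and $(ii)$ it then suffices to establish weighted weak convergence in $\{\ell^\infty([0,1]^d)\}^3$ of the linearized triple $(\bar\Cb_n/g^\omega, \bar\Cb_b^{[I_{1,n}]}/g^\omega, \bar\Cb_b^{[I_{2,n}]}/g^\omega)$, with the finite population correction in the i.i.d.\ case, since the bounds above transfer it to $\Cb_n/g^\omega$ and $\tilde\Cb_n/g^\omega$ on the stated regions. I would obtain this from asymptotic tightness plus finite-dimensional convergence via Theorem~1.5.4 in \cite{vanWel96}. Tightness of the triple follows from tightness of each coordinate in the weighted topology: for the first coordinate this is the weighted convergence $\bar\Cb_n/g^\omega \leadsto \Cb_C/g^\omega$ in $\ell^\infty([0,1]^d)$ furnished by \cite{BerBucVol17,BerSeg18} (through comparison with the genuine empirical beta copula process, which makes $\bar\Cb_n/g^\omega$ bounded up to the boundary), and for the replicates it is the same result applied to a subsample of size $b$, combined with the decomposition and index-averaging above. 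For the finite-dimensional distributions, at points with $g > 0$ division by $g^\omega$ is multiplication by a fixed scalar, so the fidis of the weighted triple are a fixed linear image of the joint fidis of $(\Gb_n, \Gb_b^{[I_{1,n}]}, \Gb_b^{[I_{2,n}]})$; these last fidis --- including the independence of the two limiting replicate copies and of $\Cb_C$, and in the i.i.d.\ case the exact role of $(1-b/n)^{-1/2}$ --- are precisely those established for Theorem~\ref{thm:sub:Cbn}, via \citet{PraWel93} in the i.i.d.\ setting and Theorem~3.1 of \cite{PolRomWol99} in the s.m.\ setting.

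I expect the main obstacle to be the weighted control of the subsample replicates under a random index. The weighted linearization of \cite{BerBucVol17} is an in-probability statement for a single sample, and one must ensure it survives both the recentring of $\Cb_b^{[m]}$ at $C_n$ rather than $C$ (handled by the decomposition) and the selection of a uniformly random subsample (handled by the stationarity/exchangeability averaging identity). A secondary difficulty is verifying that $\bar\Cb_n/g^\omega$, unlike $\Cb_n/g^\omega$, is genuinely bounded up to $\partial[0,1]^d$, so that the convergence can be stated on the full cube; this rests on the boundary cancellation built into $\Phi$ and is where the beta-copula comparison of \cite{BerSeg18} is most useful.
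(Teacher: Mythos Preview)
Your proposal is correct and follows essentially the same route as the paper: marginal weighted tightness from \cite{BerBucVol17,BerSeg18} plus finite-dimensional convergence inherited from the unweighted joint convergence of $(\Gb_n,\Gb_b^{[I_{1,n}]},\Gb_b^{[I_{2,n}]})$, and the ``furthermore'' bounds by transferring the full-sample linearization to the subsample via distributional equality and a triangle-inequality/decomposition step. The only cosmetic differences are that the paper packages the tightness-plus-fidi argument in a separate lemma stated for the $C$-centred replicates $\Phi'_C(\sqrt b(G_b^{[I_{m,n}]}-C))/g^\omega$ and then passes to $\bar\Cb_b^{[I_{m,n}]}/g^\omega$ by a linear continuous mapping, and that it invokes the distributional equality of $(\sqrt b(G_b^{[I_{m,n}]}-C),\sqrt b(\tilde C_b^{[I_{m,n}]}-C))$ with $(\Gb_b,\tilde\Cb_b)$ directly rather than your (equivalent) index-averaging identity.
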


\begin{remark}
The weak convergence $\bar \Cb_n / g^\omega \leadsto \Cb_C / g^\omega$ in $\ell^\infty([0,1]^d)$ for all $\omega \in [0,1/2)$ combined with~\eqref{eq:weighted:ae1} (or its $\tilde \Cb_n$ version~\eqref{eq:weighted:tilde:ae1}) is the slight extension of Theorem~2.2 in \cite{BerBucVol17} used in the proof of Theorem~2 in \cite{BerSeg18}. From the definition of $g$ in~\eqref{eq:g}, some thought reveals that, for any $c \in (0,1)$, the set $\{\bm u \in [0,1]^d : g(\bm u) \geq c/n \}$ contains the set $[c/n,1-c/n]^d$. The asymptotic equivalence in~\eqref{eq:weighted:ae1} (or in~\eqref{eq:weighted:tilde:ae1}) therefore holds on the set $[c/n,1-c/n]^d$ as stated in Theorem~2.2 in \cite{BerBucVol17}.
\end{remark}

\begin{thm}[Subsampling the processes $\Cb_n^\# / g^\omega$ and $\Cb_n^\beta / g^\omega$]
\label{thm:sub:Cbn:weighted:smooth}
Assume that Conditions~\ref{cond:partial:C} and~\ref{cond:partial2:C} hold and that $b = b_n \to \infty$. Also, let $I_{1,n}$ and $I_{2,n}$ be independent random variables, independent of $\Xc_n$ and uniformly distributed on the set $\{1,\dots,N_{b,n}\}$.
\begin{enumerate}[(i)]
\item Under the i.i.d.\ setting, if $b/n \to \alpha \in [0,1)$, then, for any $\omega \in [0,1/2)$,
\begin{align}
 \label{eq:sub:Cbn:hash:weighted:fsc}
(\Cb_n^\# / g^\omega, \Cb_{b,c}^{\#,[I_{1,n}]} / g^\omega, & \Cb_{b,c}^{\#,[I_{2,n}]} / g^\omega) \leadsto (\Cb_C / g^\omega, \Cb_C^{[1]} / g^\omega,\Cb_C^{[2]} / g^\omega), \\
\label{eq:sub:Cbn:beta:weighted:fsc}
(\Cb_n^\beta / g^\omega, \Cb_{b,c}^{\beta,[I_{1,n}]} / g^\omega, &\Cb_{b,c}^{\beta,[I_{2,n}]} / g^\omega) \leadsto (\Cb_C / g^\omega, \Cb_C^{[1]} / g^\omega,\Cb_C^{[2]} / g^\omega),
\end{align}
in $\{ \ell^\infty([0,1]^d) \}^3$, where $\Cb_C^{[1]}$ and $\Cb_C^{[2]}$ are independent copies of $\Cb_C$ in~\eqref{eq:CbC}, and $\Cb_{b,c}^{\#,[m]}$ and $\Cb_{b,c}^{\beta,[m]}$, $m \in \{1,\dots,N_{b,n}\}$, are the corrected versions of the subsample replicates defined in~\eqref{eq:Cbbm:hash} and~\eqref{eq:Cbbm:beta}, respectively.

\item Under the s.m.\ setting and Conditions~\ref{cond:ties} and~\ref{cond:mixing}, if $b/n \to 0$, then, for any $\omega \in [0,1/2)$,
\begin{align*}
(\Cb_n^\# / g^\omega, \Cb_b^{\#,[I_{1,n}]} / g^\omega, \Cb_b^{\#,[I_{2,n}]} / g^\omega) &\leadsto (\Cb_C / g^\omega, \Cb_C^{[1]} / g^\omega, \Cb_C^{[2]} / g^\omega), \\
(\Cb_n^\beta / g^\omega, \Cb_b^{\beta,[I_{1,n}]} / g^\omega, \Cb_b^{\beta,[I_{2,n}]} / g^\omega) &\leadsto (\Cb_C / g^\omega, \Cb_C^{[1]} / g^\omega, \Cb_C^{[2]} / g^\omega),
\end{align*}
in $\{ \ell^\infty([0,1]^d) \}^3$. 
\end{enumerate}
\end{thm}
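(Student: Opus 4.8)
The plan is to upgrade the unweighted joint convergences of Theorem~\ref{thm:sub:Cbn} to the weighted topology by pairing them with the weighted subsampling result for the approximation process $\bar\Cb_n/g^\omega$ in Theorem~\ref{thm:sub:Cbn:weighted}, controlling the behaviour away from and near the boundary of $[0,1]^d$ by different means. The structural fact I would rely on throughout is that $C_n^\#$, $C_n^\beta$ and every subsample version $C_b^{\#,[m]}$, $C_b^{\beta,[m]}$ is a \emph{genuine} copula, so that $\Cb_n^\#/g^\omega$, $\Cb_n^\beta/g^\omega$ and their replicates are bounded on the whole of $[0,1]^d$ (their zero-sets contain that of $g$); this is what makes weak convergence in $\ell^\infty([0,1]^d)$, rather than only on the interior as for $\Cb_n/g^\omega$, the correct target. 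Fixing $\omega\in[0,1/2)$, for each $c\in(0,1)$ I would split every weighted supremum over $\{g\geq c/n\}$ and $\{g<c/n\}$, and, for the replicates, over $\{g\geq c/b\}$ and $\{g<c/b\}$.

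On the interior region I would show that each smooth weighted process is uniformly asymptotically equivalent to $\bar\Cb_n/g^\omega$, so that Theorem~\ref{thm:sub:Cbn:weighted} delivers the joint weighted convergence there. For the checkerboard process this is cheap: the deterministic bound~\eqref{eq:ae:Cn:hash} gives $\sup_{\bm u}|\Cb_n^\#-\hat\Cb_n|\leq d/\sqrt n$, whence on $\{g\geq c/n\}$ the weighted difference is at most $d\,c^{-\omega}n^{\omega-1/2}\to0$ since $\omega<1/2$; combining this with $\hat\Cb_n=\tilde\Cb_n$ (valid in the absence of ties) and the equivalence~\eqref{eq:weighted:tilde:ae1} gives the claim, and~\eqref{eq:weighted:tilde:ae2} does the same for the replicates on $\{g\geq c/b\}$. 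For the beta process the corresponding interior equivalence is part of the proof of Theorem~2 of \cite{BerSeg18}; note that the plain $o_\Pr(1)$ sup-norm equivalence $\Cb_n^\beta\approx\hat\Cb_n$ furnished by Corollary~3.7 of \cite{SegSibTsu17} is \emph{not} enough after weighting, which is exactly why the finer estimate of \cite{BerSeg18} is required. The finite population correction of case~$(i)$ intervenes only as a bounded multiplicative factor and is harmless here.

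On the boundary region I would prove the uniform negligibility estimate
\[
\lim_{c\to0}\ \limsup_{n\to\infty}\ \Pr\Big(\sup_{\bm u:\,g(\bm u)<c/n}\frac{|\Cb_n^\beta(\bm u)|}{g(\bm u)^\omega}>\eps\Big)=0
\]
for every $\eps>0$, together with its analogues for $\Cb_n^\#$ and for the (corrected) replicates with threshold $c/b$. For the original processes these are the genuine-copula boundary estimates behind \cite{BerSeg18} (beta) and its checkerboard counterpart. For the replicates I would write $\Cb_b^{\beta,[m]}=\sqrt b\,(C_b^{\beta,[m]}-C)-\sqrt{b/n}\,\Cb_n^\beta$: the second term is $\sqrt{b/n}$ times the already-controlled $\Cb_n^\beta/g^\omega$---a bounded factor that moreover vanishes in case~$(ii)$---while for the first term the uniformity over the uniformly distributed index $I_{m,n}$ collapses, by stationarity and linearity of expectation, to a single estimate: conditioning on $\bm\Xc_n$ and averaging shows that $\Pr(\sup_{g<c/b}|\sqrt b(C_b^{\beta,[I_{m,n}]}-C)|/g^\omega>\eps)$ equals the same probability for the ordinary beta copula process at sample size $b$, since every subsample of size $b$ has the distribution of $\bm\Xc_b$. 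Applying the estimate of \cite{BerSeg18} at level $b$ (with $b\to\infty$ and $c/b\to0$) then closes this term; the checkerboard replicates are treated identically.

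Combining the two regions---letting $n\to\infty$ for fixed $c$ and then $c\to0$---gives weak convergence of each weighted process over the full cube, and hence, jointly with Theorem~\ref{thm:sub:Cbn:weighted}, the stated convergences in $\{\ell^\infty([0,1]^d)\}^3$. In case~$(ii)$ the hypothesis $b/n\to0$ both removes the correction and kills the centering-difference term, and Conditions~\ref{cond:ties} and~\ref{cond:mixing} are imported precisely where Theorem~\ref{thm:sub:Cbn:weighted}$(ii)$ and the boundary estimates require them. I expect the genuine difficulty to lie not in the subsampling bookkeeping---which, as above, reduces to single-sample estimates at size $b$---but in the boundary estimates: the beta case is \cite{BerSeg18}, whereas the checkerboard case appears to have no ready-made reference, so I would need to establish the weighted weak convergence of $\Cb_n^\#/g^\omega$ and the attendant scale-$1/n$ genuine-copula boundary bound from scratch, exploiting the explicit piecewise-multilinear form~\eqref{eq:Cn:hash} of $C_n^\#$.
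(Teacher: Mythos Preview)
Your proposal is correct and follows essentially the same route as the paper. Both arguments reduce to the joint weighted convergence of $\bar\Cb_n/g^\omega=\Phi'_C(\Gb_n)/g^\omega$ and its subsample analogues (your Theorem~\ref{thm:sub:Cbn:weighted}; the paper's Lemma~\ref{lem:sub:Cbn:weighted}), then establish the global $o_\Pr(1)$ equivalence of each smooth process to $\bar\Cb_n/g^\omega$ and transfer it to the replicates by the equality-in-distribution device you describe. The paper simply cites the final display of the proof of Theorem~2 in \cite{BerSeg18} for the beta equivalence rather than redoing the interior/boundary split, and handles the centering by applying the continuous mapping theorem to the joint limit rather than via your explicit decomposition $\Cb_b^{\beta,[m]}=\sqrt b(C_b^{\beta,[m]}-C)-\sqrt{b/n}\,\Cb_n^\beta$; and, as you anticipated, the checkerboard equivalence is proved as a separate lemma (Lemma~\ref{lem:wc:Cbn:hash:weighted}) with threshold $n^{-\gamma}$, $\gamma\in(1/\{2(1-\omega)\},1)$, in place of your $c/n$---both choices work, and your subsequent $c\to0$ step is in fact unnecessary since the interior and boundary bounds already vanish for each fixed $c$.
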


\begin{remark}
A by-product of practical interest of the proof of Theorem~\ref{thm:sub:Cbn:weighted:smooth} is the weighted weak convergence of the empirical checkerboard copula process under the assumption of continuous marginal d.f.s $F_1,\dots,F_d$ considered in this work; see also Lemma~\ref{lem:wc:Cbn:hash:weighted} in Appendix~\ref{app:proofs}.
\end{remark}

We end this section by giving an example of application of Theorem~\ref{thm:sub:Cbn:weighted:smooth}. Following again~\citet[Section~4]{BerSeg18}, we consider the issue of estimating an extreme-value copula and state a result that confirms that basing the related inference on subsampling is asymptotically valid under the assumptions of Theorem~\ref{thm:sub:Cbn:weighted:smooth}

Let $\Delta_{d-1} = \{(w_1,\dots,w_{d-1}) \in [0,1]^{d-1} : w_1 + \dots + w_{d-1} \le 1 \}$ be the unit simplex. A copula $C$ is an extreme-value copula if and only if there exists a function $A:\Delta_{d-1} \to [1/d,1]$\index{$A$} such that, for any $\bm{u} \in (0,1]^d \setminus \{(1,\dots,1)\}$,
$$
C(\bm{u}) = \exp \biggl\{ \biggl(\,\sum_{j=1}^d \ln u_j \biggr) A \biggl(\frac{\ln u_2}{\sum_{j=1}^d \ln u_j}, \dots, \frac{\ln u_{d}}{\sum_{j=1}^d \ln u_j} \biggr)\biggr\}.
$$
The function $A$ is called the \emph{Pickands dependence function} associated with $C$. As explained, for instance, in~\cite[Section~4]{BerSeg18}, it can be expressed as a functional of $C$ through
\begin{equation}
  \label{eq:A:C}
  A(\bm w) = \nu(C)(\bm w), \qquad \bm w \in \Delta_{d-1},
\end{equation}
where the map $\nu$ from $\ell^\infty([0,1]^d)$ to $\ell^\infty(\Delta_{d-1})$ is defined, for any $f \in \ell^\infty([0,1]^d)$ and $\bm w \in \Delta_{d-1}$, by
\begin{equation}
  \label{eq:nu}
\nu(f)(\bm w) = \exp\left[ - \gamma + \int_0^1 \{ f(u^{w_1},\dots,u^{w_d}) - \1(u \in [e^{-1},1]) \} \frac{\dd u}{u \ln u} \right],
\end{equation}
with $\gamma = 0.5572\dots$ the Euler-Mascheroni constant.

Starting from~\eqref{eq:A:C}, a natural approach to obtain an estimator of $A$ consists of using the plug-in principle. Instead of replacing $C$ by the empirical copula $\hat C_n$ in~\eqref{eq:hat:Cn} (and thus obtaining the rank-based version of the \emph{Cap\'era\`a--Foug\`eres--Genest} estimator of $A$; see \cite{CapFouGen97,GenSeg09,GudSeg12}), \citet{BerSeg18} proposed to use the empirical beta copula $C_n^\beta$ in~\eqref{eq:Cn:beta}, which leads to the estimator
\begin{align*}
  A_n^{\beta}(\bm w) = \nu(C_n^\beta)(\bm w), \qquad \bm w \in \Delta_{d-1}.
\end{align*}
One could also consider the analogue estimator of $A$ based on the empirical checkerboard copula $C_n^\#$ in~\eqref{eq:Cn:hash}, namely
\begin{align*}
  A_n^{\#}(\bm w) = \nu(C_n^\#)(\bm w), \qquad \bm w \in \Delta_{d-1}.
\end{align*}

Let $\Ab_n^{\#} = \sqrt{n} (A_n^{\#} - A)$ and $\Ab_n^{\beta} = \sqrt{n} (A_n^{\beta} - A)$, and define their corresponding (corrected) subsample replicates by $\Ab_{b,c}^{\#,[m]} = (1-b/n)^{-1/2} \sqrt{b} \{\nu(C_b^{\#,[m]}) -  \nu(C_n^\#)\}$ and $\Ab_{b,c}^{\beta,[m]} = (1-b/n)^{-1/2} \sqrt{b} \{\nu(C_b^{\beta,[m]}) -  \nu(C_n^\beta)\}$, $m \in \{1,\dots,N_{b,n}\}$, respectively. The following result, proven in Appendix~\ref{app:proofs}, is then a consequence of Theorem~\ref{thm:sub:Cbn:weighted:smooth}.

\begin{cor}[Subsampling the processes $\Ab_n^\#$ and $\Ab_n^\beta$]
  \label{cor:sub:Abn}
  Let $C$ be an extreme-value copula with Pickands dependence function $A$.
  Under the assumptions of Theorem~\ref{thm:sub:Cbn:weighted:smooth},
  \begin{align}
\label{eq:sub:Abn:hash}
(\Ab_n^\#, \Ab_{b,c}^{\#,[I_{1,n}]}, & \Ab_{b,c}^{\#,[I_{2,n}]}) \leadsto (\Ab_C, \Ab_C^{[1]},\Ab_C^{[2]}), \\
\label{eq:sub:Abn:beta}
(\Ab_n^\beta, \Ab_{b,c}^{\beta,[I_{1,n}]}, &\Ab_{b,c}^{\beta,[I_{2,n}]}) \leadsto (\Ab_C, \Ab_C^{[1]},\Ab_C^{[2]}),
\end{align}
in $\{ \ell^\infty(\Delta_{d-1}) \}^3$, where $I_{1,n}$ and $I_{2,n}$ are independent random variables, independent of $\Xc_n$ and uniformly distributed on the set $\{1,\dots,N_{b,n}\}$, and $\Ab_C^{[1]}$ and $\Ab_C^{[2]}$ are independent copies of $\Ab_C$ defined by
$$
\Ab_C(\bm w) = A(\bm w) \int_0^1 \Cb_C(u^{w_1},\dots,u^{w_d}) \frac{\dd u}{u \ln u}, \qquad \bm w \in \Delta_{d-1}.
$$
\end{cor}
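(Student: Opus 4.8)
The plan is to exploit the explicit exponential structure of the map $\nu$ in~\eqref{eq:nu} so as to reduce the claim to a continuous-mapping argument applied to the joint weighted convergence already established in Theorem~\ref{thm:sub:Cbn:weighted:smooth}. Write $L$ for the linear map $L(h)(\bm w) = \int_0^1 h(u^{w_1},\dots,u^{w_d}) \, \dd u /(u \ln u)$, $\bm w \in \Delta_{d-1}$. For any genuine copula $D$ such that $D - C$ is integrable against the kernel $\dd u/(u\ln u)$, the defining formula~\eqref{eq:nu} yields the \emph{exact} identity $\nu(D)(\bm w) = \nu(C)(\bm w)\exp\{L(D-C)(\bm w)\} = A(\bm w)\exp\{L(D-C)(\bm w)\}$, using~\eqref{eq:A:C}; the required integrability holds for the copulas $C$, $C_n^\#$, $C_n^\beta$ and their subsample versions because all are genuine copulas, so $1-D(u^{w_1},\dots,u^{w_d})$ and $D(u^{w_1},\dots,u^{w_d})$ vanish at the respective rates near $u=1$ and $u=0$. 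In particular $\Ab_C = A \cdot L(\Cb_C)$, so the three target limits are precisely the images of $\Cb_C, \Cb_C^{[1]}, \Cb_C^{[2]}$ under $h \mapsto A \cdot L(h)$.

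Fix $\omega \in (0,1/2)$. The first step is to show that $L$ is a bounded linear map from the weighted space $\{h : \|h/g^\omega\|_\infty < \infty\}$, equipped with the norm $\|h/g^\omega\|_\infty$, into $\ell^\infty(\Delta_{d-1})$; equivalently, that $\sup_{\bm w \in \Delta_{d-1}} \int_0^1 \{g(u^{w_1},\dots,u^{w_d})\}^\omega \, \dd u/(u|\ln u|) < \infty$. Granting this, $L$ is continuous for the weighted norm, and the continuous mapping theorem turns any weighted weak limit into a weak limit under $L$.

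For the centering process, linearity of $L$ and the identity above give $\Ab_n^\# = \sqrt{n}\{\nu(C_n^\#)-\nu(C)\} = \sqrt{n}\,A\{\exp(n^{-1/2}L(\Cb_n^\#)) - 1\}$, since $L(C_n^\#-C) = n^{-1/2}L(\Cb_n^\#)$. As $\|L(\Cb_n^\#)\|_\infty = O_\Pr(1)$ by the first step and Theorem~\ref{thm:sub:Cbn:weighted:smooth}, a first-order expansion of the exponential yields $\Ab_n^\# = A \cdot L(\Cb_n^\#) + o_\Pr(1)$. For a replicate, the same exact identity applied to the two genuine copulas $C_b^{\#,[m]}$ and $C_n^\#$ gives $\nu(C_b^{\#,[m]}) - \nu(C_n^\#) = \nu(C_n^\#)\{\exp(b^{-1/2}L(\Cb_b^{\#,[m]}))-1\}$; multiplying by $(1-b/n)^{-1/2}\sqrt b$, using $\nu(C_n^\#) = A_n^\# = A + o_\Pr(1)$ uniformly, the identity $(1-b/n)^{-1/2}L(\Cb_b^{\#,[m]}) = L(\Cb_{b,c}^{\#,[m]})$, and again expanding the exponential (note $\|L(\Cb_{b,c}^{\#,[I_{m,n}]})\|_\infty = O_\Pr(1)$ by Theorem~\ref{thm:sub:Cbn:weighted:smooth}), I obtain $\Ab_{b,c}^{\#,[I_{m,n}]} = A \cdot L(\Cb_{b,c}^{\#,[I_{m,n}]}) + o_\Pr(1)$ for $m \in \{1,2\}$. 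It then suffices to feed the joint convergence $(\Cb_n^\#/g^\omega, \Cb_{b,c}^{\#,[I_{1,n}]}/g^\omega, \Cb_{b,c}^{\#,[I_{2,n}]}/g^\omega) \leadsto (\Cb_C/g^\omega,\Cb_C^{[1]}/g^\omega,\Cb_C^{[2]}/g^\omega)$ of Theorem~\ref{thm:sub:Cbn:weighted:smooth}(i) through the map $h/g^\omega \mapsto A \cdot L(h)$, continuous by the first step, applied in each coordinate, and to absorb the $o_\Pr(1)$ remainders, to conclude~\eqref{eq:sub:Abn:hash}. The argument for~\eqref{eq:sub:Abn:beta} is identical with $\#$ replaced by $\beta$, and the s.m.\ cases follow the same way from part~(ii), the finite population correction being asymptotically negligible there since $b/n \to 0$.

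I expect the main obstacle to be the uniform integrability in the first step: the kernel $1/(u\ln u)$ is singular at both endpoints, and one must control $\{g(u^{w_1},\dots,u^{w_d})\}^\omega$ near $u=0$ and $u=1$ uniformly in $\bm w$, including on the boundary of $\Delta_{d-1}$ where some $w_j$ vanish (so that the corresponding coordinates $u^{w_j}$ stay equal to $1$). This is exactly where the weighted weak convergence of Theorem~\ref{thm:sub:Cbn:weighted:smooth}, rather than the plain convergence of Theorem~\ref{thm:sub:Cbn}, is indispensable, since against merely bounded integrands the kernel fails to be integrable near $u=1$; the needed bound is the same uniform estimate underlying the Hadamard differentiability of $\nu$ established in \cite{BerSeg18}, which makes the above first-order expansions legitimate.
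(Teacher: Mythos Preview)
Your proposal is correct and follows essentially the same approach as the paper: both hinge on the uniform bound $\sup_{\bm w \in \Delta_{d-1}} \int_0^1 \{g(u^{w_1},\dots,u^{w_d})\}^\omega\, \dd u / (u|\ln u|) < \infty$, which makes $h/g^\omega \mapsto L(h)$ continuous from $\ell^\infty([0,1]^d)$ to $\ell^\infty(\Delta_{d-1})$, and then use the exact identity $\ln \nu(D) - \ln \nu(C) = L(D-C)$ together with a first-order linearization of the exponential. The only cosmetic difference is that the paper first obtains the joint convergence of $\sqrt{n}(\ln A_n^\# - \ln A)$ and its replicates via the continuous mapping theorem and then invokes the functional delta method for $\exp$, whereas you expand $\exp$ by hand to reach $A\cdot L(\Cb_n^\#)+o_\Pr(1)$ directly; these are the same argument.
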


The previous corollary can be used, for example, to obtain an asymptotically valid symmetric $1-\alpha$ confidence band (see, e.g., \cite{Kos08}, Chapter~2) for $A$. Relying for instance on $A_n^\#$, such a confidence band is given by $A_n^\# \pm \Fc_{M}^{\#,-}(1-\alpha) / \sqrt{n}$, $\alpha \in (0,1/2)$, where $\Fc_{M}^\#$ is the empirical d.f.\ of a sample of $M$ (corrected) subsample replicates $\sup_{\bm w \in \Delta_{d-1}} |\Ab_{b,c}^{\#,[I_{m,n}]}(\bm w)|$, $m \in \{1,\dots,M\}$, of $\sup_{\bm w \in \Delta_{d-1}} |\Ab_{n}^{\#}(\bm w)|$, where $I_{1,n},\dots,I_{M,n}$ are chosen independently with replacement from $\{1,\dots,N_{b,n}\}$.

\section{Monte Carlo experiments}
\label{sec:MC}

The theoretical results provided in the preceding sections state conditions under which the subsampling methodology can be used to obtain asymptotically valid approximations of various (smooth, weighted) empirical copula processes. The results cover two types of subsampling: in the case of i.i.d.\ observations, subsamples of size $b < n$ are taken without replacement from the available data $\bm \Xc_n$; in the time series case, subsamples are restricted to consecutive observations to preserve serial dependence. In both cases, a crucial step prior to applying subsampling is the choice of the subsample size $b$.

\subsection{Subsampling for i.i.d.\ observations}

To investigate the influence of $b$ on the finite-sample performance of the subsampling methodology for the studied empirical copula processes in the case of i.i.d.\ observations, we considered an experimental setting similar to the one used in \cite{BucDet10}. Since all the empirical copula processes under consideration are rank-based, samples $\bm \Xc_n$ were generated directly from a $d$-dimensional copula $C$ chosen so that its bivariate margins have a Kendall's tau of $\tau$. For the $d$-dimensional copula $C$, we considered either a Clayton copula (which is lower-tail dependent) or a Gumbel--Hougaard copula (which is upper-tail dependent). The values of $n$, $d$ and $\tau$ were taken to vary in the sets $\{25,50,100,200,400\}$, $\{2,4\}$ and $\{0.33,0.66\}$, respectively. The experiments that were carried out are presented in detail hereafter along with a subset of representative results. More comprehensive results are available in the supplementary material.

\begin{table}[t!]
\centering
\caption{First horizontal block: (accurately estimated) covariance of $\hat \Cb_n$ at the points $P = \{(i/3,j/3):i,j=1,2\}$, for $n=100$, $C$ a bivariate Clayton copula and $\tau=0.33$. Remaining horizontal blocks: averages of 1000 covariance estimates based on subsampling (sub), empirical bootstrap (boot) and multiplier bootstrap (mult) approximations of $\hat \Cb_n$.}
\label{tab:cov:mean}

\begin{tabular}{llllll}
\toprule
  &   & (1/3, 1/3) & (1/3, 2/3) & (2/3, 1/3) & (2/3, 2/3)\\
\midrule
 & (1/3, 1/3) & 0.0488 & 0.0198 & 0.0200 & 0.0100\\

 & (1/3, 2/3) &  & 0.0337 & 0.0091 & 0.0185\\

 & (2/3, 1/3) &  &  & 0.0338 & 0.0185\\

\multirow{-4}{*}{\raggedright\arraybackslash $\hat \Cb_n$} & (2/3, 2/3) &  &  &  & 0.0513\\
\cmidrule{1-6}
 & (1/3, 1/3) & 0.0562 & 0.0205 & 0.0207 & 0.0089\\

 & (1/3, 2/3) &  & 0.0371 & 0.0084 & 0.0182\\

 & (2/3, 1/3) &  &  & 0.0375 & 0.0183\\

\multirow{-4}{*}{\raggedright\arraybackslash $\hat \Cb_{\ip{0.28n}}^{sub}$} & (2/3, 2/3) &  &  &  & 0.0583\\
\cmidrule{1-6}
 & (1/3, 1/3) & 0.0619 & 0.0241 & 0.0244 & 0.0096\\

 & (1/3, 2/3) &  & 0.0452 & 0.0094 & 0.0209\\

 & (2/3, 1/3) &  &  & 0.0458 & 0.0211\\

\multirow{-4}{*}{\raggedright\arraybackslash $\hat \Cb_n^{boot}$} & (2/3, 2/3) &  &  &  & 0.0690\\
\cmidrule{1-6}
 & (1/3, 1/3) & 0.0511 & 0.0199 & 0.0203 & 0.0092\\

 & (1/3, 2/3) &  & 0.0350 & 0.0091 & 0.0181\\

 & (2/3, 1/3) &  &  & 0.0356 & 0.0185\\

\multirow{-4}{*}{\raggedright\arraybackslash $\hat \Cb_n^{mult}$} & (2/3, 2/3) &  &  &  & 0.0536\\
\bottomrule
\end{tabular}

\end{table}

\paragraph{Subsampling approximation of the covariance of $\hat \Cb_n$ and choice of $b$}

Following \cite{BucDet10}, our first experiment, restricted to $d=2$, consisted of measuring how well the subsampling methodology can approximate the covariance of the empirical copula process $\hat \Cb_n$ in~\eqref{eq:hat:Cbn} at the points $P = \{(i/3,j/3):i,j=1,2\}$. We began by precisely estimating the covariance of $\hat \Cb_n$ at the points in $P$ from $100,000$ samples $\bm \Xc_n$. For $n=100$, $C$ a bivariate Clayton copula and $\tau=0.33$, these covariance values are given in the first horizontal block of Table~\ref{tab:cov:mean}. Next, for a given value of $b$ and each combination of $C$, $n$ and $\tau$, we generated 1000 samples $\bm \Xc_n$, and, for each sample, we computed $M=1000$ (corrected) subsample replicates $\hat \Cb_{b,c}^{[I_{1,n}]},\dots,\hat \Cb_{b,c}^{[I_{M,n}]}$ defined analogously to~\eqref{eq:Cbbm:corrected} from~\eqref{eq:hat:Cbbm}, and where $I_{1,n},\dots,I_{M,n}$ are independent random variables uniformly distributed on $\{1,\dots,N_{b,n}\}$. These $M=1000$ subsample replicates of $\hat \Cb_n$ were used to estimate the covariance of $\hat \Cb_n$ at the points in~$P$. For $n=100$, $C$ a bivariate Clayton copula, $\tau=0.33$ and $b = \ip{0.28n}$, the means over the samples $\bm \Xc_n$ of the 1000 covariance estimates are given in the second horizontal block of Table~\ref{tab:cov:mean}, while the corresponding (empirical) mean squared errors (MSEs) (with respect to the target values reported in the first horizontal block of Table~\ref{tab:cov:mean}) multiplied by~$10^4$ are given in the first horizontal block of Table~\ref{tab:cov:MSE}. To investigate the influence of $b$ on such MSEs, a grid of $b$ values was considered. We added to the grid the value $b = \ip{0.28n}$ suggested by \citet[Section~4]{Wu90} in the context of the delete-$h$ jackknife for the mean following an analysis based on Edgeworth expansions. The latter value was observed to give close to the lowest empirical MSEs across all our experiments in the i.i.d.\ setting (see the supplementary material for more details), which is why we chose to report results for this setting for $b$ in all subsequent experiments in the i.i.d.\ case. Note that choosing $b$ proportional to $n$ in such a way is completely compatible with the theoretical results stated in Theorems~\ref{thm:sub:Cbn}~(i),~\ref{thm:sub:Cbn:weighted}~(i) and~\ref{thm:sub:Cbn:weighted:smooth}~(i). Finally, it is important to mention that similar simulations (partly reported in the supplement) were carried using uncorrected subsample replicates and confirm that, overall, the finite population correction seems beneficial also in the context under consideration.

\begin{table}[t!]
\centering
\caption{For $n=100$, $C$ a bivariate Clayton copula and $\tau=0.33$, empirical MSEs ($\times 10^4$) of estimators of the covariance of $\hat \Cb_n$ at the points $P = \{(i/3,j/3):i,j=1,2\}$ based on subsampling (sub), the empirical bootstrap (boot) and the multiplier bootstrap (mult).}
\label{tab:cov:MSE}

\begin{tabular}{llllll}
\toprule
  &   & (1/3, 1/3) & (1/3, 2/3) & (2/3, 1/3) & (2/3, 2/3)\\
\midrule
 & (1/3, 1/3) & 0.9006 & 0.3521 & 0.3389 & 0.1907\\

 & (1/3, 2/3) &  & 0.8323 & 0.1147 & 0.1785\\

 & (2/3, 1/3) &  &  & 0.8250 & 0.1680\\

\multirow{-4}{*}{\raggedright\arraybackslash $\hat \Cb_{\ip{0.28n}}^{sub}$} & (2/3, 2/3) &  &  &  & 0.7330\\
\cmidrule{1-6}
 & (1/3, 1/3) & 2.2250 & 0.6925 & 0.6632 & 0.3199\\

 & (1/3, 2/3) &  & 2.2307 & 0.1995 & 0.3687\\

 & (2/3, 1/3) &  &  & 2.3528 & 0.3640\\

\multirow{-4}{*}{\raggedright\arraybackslash $\hat \Cb_n^{boot}$} & (2/3, 2/3) &  &  &  & 3.5708\\
\cmidrule{1-6}
 & (1/3, 1/3) & 0.6331 & 0.4966 & 0.4547 & 0.3144\\

 & (1/3, 2/3) &  & 0.9287 & 0.1811 & 0.2648\\

 & (2/3, 1/3) &  &  & 0.9084 & 0.2502\\

\multirow{-4}{*}{\raggedright\arraybackslash $\hat \Cb_n^{mult}$} & (2/3, 2/3) &  &  &  & 0.4086\\
\bottomrule
\end{tabular}

\end{table}

\paragraph{Comparison with the empirical bootstrap and multiplier bootstrap approximations}

In a second step, we carried out the same experiment using the classical empirical bootstrap and the multiplier bootstrap. The former consisted, for each generated sample $\bm \Xc_n$ from $C$, of generating $M=1000$ resamples (by sampling with replacement from $\bm \Xc_n$) and computing $\tilde \Cb_n$ in~\eqref{eq:tilde:Cbn} from each resample; the resulting $M=1000$ {\em resample replicates} of $\hat \Cb_n$ were used to estimate the covariance of $\hat \Cb_n$ at the points in $P$. For $n=100$, $C$ a bivariate Clayton copula and $\tau=0.33$, the averages of these estimates are given in the third horizontal block of Table~\ref{tab:cov:mean}, while the corresponding empirical MSEs are given in the second horizontal block of Table~\ref{tab:cov:MSE}. The multiplier bootstrap \citep[see, e.g.,][]{Sca05,RemSca09} was implemented as in~\cite{BucDet10} and the corresponding results are reported in the last horizontal blocks of Tables~\ref{tab:cov:mean} and~\ref{tab:cov:MSE} for $n=100$, $C$ a bivariate Clayton copula and $\tau=0.33$. Note that these two tables are directly comparable with the similar tables reported in \cite{BucDet10}. For all $C$ and $\tau$, the average of the 10 empirical MSEs is plotted against the sample size $n$ in Fig.~\ref{fig:cov:MSE}.

\begin{figure}[t!]
\begin{center}
  \includegraphics*[width=1\linewidth]{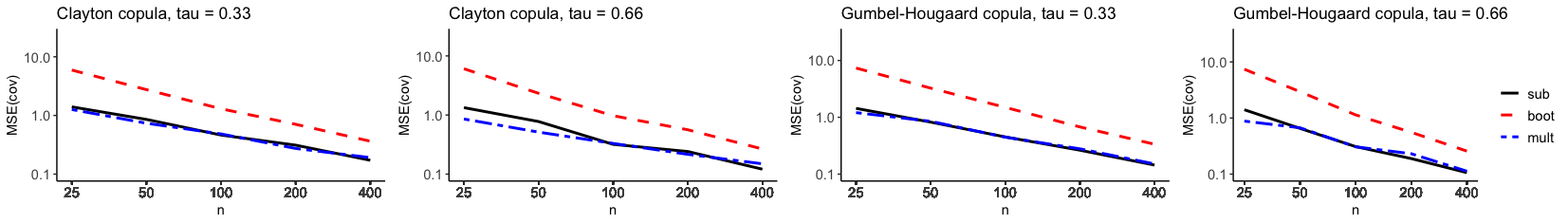}
  \caption{\label{fig:cov:MSE} Averages of the empirical MSEs ($\times 10^4$) of covariance estimators at the points $P = \{(i/3,j/3):i,j=1,2\}$ based on subsampling (sub), the empirical bootstrap (boot) and the multiplier bootstrap (mult) against the sample size $n$, for $C$ a bivariate Clayton or Gumbel--Hougaard copula and $\tau \in \{0.33,0.66\}$.}
\end{center}
\end{figure}

As one can see from Table~\ref{tab:cov:MSE} and Fig.~\ref{fig:cov:MSE}, the finite-sample performance of the subsampling approximation appears comparable to the one of the multiplier bootstrap and is substantially better than the one of the empirical bootstrap.

Similar simulations were also carried out for the $b$ out of $n$ bootstrap (see the supplementary material) which generalizes the empirical bootstrap. The smallest empirical MSEs, obtained when $b$ is ``small'' compared to $n$, were still higher, overall, than the corresponding empirical MSEs obtained using subsampling or the multiplier bootstrap. The latter should not come as a surprise, since, as already mentioned in the introduction, the $b$ out of $n$ bootstrap forms subsamples with ties, making it a biased resampling technique in the rank-based context under consideration.

\paragraph{Estimation of quantiles of Kolmogorov--Smirnov and Cramér--von Mises functionals}

Following again~\cite{BucDet10}, we focused next on subsampling, empirical bootstrap and multiplier bootstrap approximations of high quantiles of
\begin{equation}
  \label{eq:KS:CvM}
  KS(f_n) = \sup_{\bm u \in [0,1]^d} |f_n(\bm u)| \qquad \text{and} \qquad CvM(f_n) = \int_{[0,1]^d} \{ f_n(\bm u) \}^2 \dd \bm u
\end{equation}
for $f_n = \hat \Cb_n$. From a practical perspective, the supremum and the integral in~\eqref{eq:KS:CvM} were approximated by a maximum and a mean, respectively, using a uniform grid on $(0,1)^d$ of size $9^2$ when $d=2$ and $4^4$ when $d=4$. For every $d$, $C$, $\tau$ and $n$, the 90\% and 95\%-quantiles of $KS(\hat \Cb_n)$ and $CvM(\hat \Cb_n)$ were first estimated precisely from $100,000$ samples~$\bm \Xc_n$. For $n \in \{100,200\}$, $C$ a bivariate Clayton copula and $\tau=0.33$, these are given in the first lines of each horizontal block of Table~\ref{tab:stat:mean}. Then, for each $n$, 1000 samples~$\bm \Xc_n$ were generated and, for each $\bm \Xc_n$, one estimate of each quantile was computed from $M=1000$ subsampling, empirical bootstrap and multiplier bootstrap replicates of the considered functional. These estimations were also carried out using \emph{centered} replicates of $\hat \Cb_n$. In the case of subsampling, this consists of using, for any $\bm u \in [0,1]^d$ and $m \in \{1,\dots,M\}$,
\begin{equation}
\label{eq:hat:Cbb:fsc:centered}
\hat \Cb_{b,c}^{[I_{m,n}]}(\bm u) - \frac{1}{M} \sum_{m=1}^M \hat \Cb_{b,c}^{[I_{m,n}]}(\bm u),
\end{equation}
instead of $\hat \Cb_{b,c}^{[I_{m,n}]}(\bm u)$. The centered versions of the empirical bootstrap and multiplier bootstrap replicates are defined analogously. The rationale behind centering is that the replicates, whatever their type, converge weakly to copies of the \emph{centered} Gaussian process $\Cb_C$ in~\eqref{eq:CbC}. As the use of centered replicates always led to better finite-sample performance, it was adopted in all subsequent experiments. Notice that the use of centering is irrelevant in the previous covariance estimation experiment given the formula of the empirical covariance.

For each quantile and each type of approximation, the means of the 1000 estimates are reported in Table~\ref{tab:stat:mean}, while the corresponding MSEs are given in Table~\ref{tab:stat:MSE}. These two tables are again directly comparable with a similar table reported in \cite{BucDet10}. For $d=4$, $C$ a Gumbel--Hougaard copula and $\tau = 0.66$, for instance, the empirical MSEs of the quantile estimators are plotted against the sample size $n$ in Fig.~\ref{fig:stat:MSE}. Graphs for other $d$, $C$ and $\tau$ are not qualitatively different.

\begin{table}[t!]
\centering
\caption{First line of each horizontal block: (accurately estimated) 90\% and 95\%-quantiles of $KS(\hat \Cb_n)$ and $CvM(\hat \Cb_n)$ for $C$ a bivariate Clayton copula and $\tau=0.33$. Remaining lines of each horizontal block: averages of 1000 estimates of the same quantiles based on subsampling (sub), empirical bootstrap (boot) and multiplier bootstrap (mult) approximations of $\hat \Cb_n$.}
\label{tab:stat:mean}

\begin{tabular}{llrrrr}
\toprule
  & $f_{n}$ & $90\% (KS)$ & $95\% (KS)$ & $90\% (CvM)$ & $95\% (CvM)$\\
\midrule
 & $\hat \Cb_n$ & 0.5664 & 0.6437 & 0.0464 & 0.0580\\

 & $\hat \Cb_{\ip{0.28n}}^{sub}$ & 0.6209 & 0.6798 & 0.0465 & 0.0573\\

 & $\hat \Cb_n^{boot}$ & 0.6880 & 0.7526 & 0.0613 & 0.0743\\

\multirow{-4}{*}{\raggedright\arraybackslash $n = 100$} & $\hat \Cb_n^{mult}$ & 0.5964 & 0.6561 & 0.0478 & 0.0591\\
\cmidrule{1-6}
 & $\hat \Cb_n$ & 0.5770 & 0.6368 & 0.0463 & 0.0576\\

 & $\hat \Cb_{\ip{0.28n}}^{sub}$ & 0.6148 & 0.6744 & 0.0490 & 0.0605\\

 & $\hat \Cb_n^{boot}$ & 0.6549 & 0.7172 & 0.0555 & 0.0676\\

\multirow{-4}{*}{\raggedright\arraybackslash $n = 200$} & $\hat \Cb_n^{mult}$ & 0.5982 & 0.6576 & 0.0476 & 0.0590\\
\bottomrule
\end{tabular}

\end{table}

\begin{table}[t!]
\centering
\caption{For $C$ a bivariate Clayton copula and $\tau=0.33$, empirical MSEs ($\times 10^4$) of estimators of the 90\% and 95\%-quantiles of $KS(\hat \Cb_n)$ and $CvM(\hat \Cb_n)$ based on subsampling (sub), empirical bootstrap (boot) and multiplier bootstrap (mult) approximations of $\hat \Cb_n$.}
\label{tab:stat:MSE}

\begin{tabular}{llrrrr}
\toprule
  & $f_{n}$ & $90\% (KS)$ & $95\% (KS)$ & $90\% (CvM)$ & $95\% (CvM)$\\
\midrule
 & $\hat \Cb_{\ip{0.28n}}^{sub}$ & 34.7486 & 19.9671 & 0.2659 & 0.4342\\

 & $\hat \Cb_n^{boot}$ & 151.8726 & 124.1483 & 2.5361 & 3.1865\\

\multirow{-3}{*}{\raggedright\arraybackslash $n = 100$} & $\hat \Cb_n^{mult}$ & 13.8527 & 8.5551 & 0.3347 & 0.5792\\
\cmidrule{1-6}
 & $\hat \Cb_{\ip{0.28n}}^{sub}$ & 17.1279 & 17.9681 & 0.2344 & 0.3720\\

 & $\hat \Cb_n^{boot}$ & 63.5901 & 68.6258 & 1.0172 & 1.3002\\

\multirow{-3}{*}{\raggedright\arraybackslash $n = 200$} & $\hat \Cb_n^{mult}$ & 7.3044 & 8.4616 & 0.1850 & 0.3308\\
\bottomrule
\end{tabular}

\end{table}

As one can see from Table~\ref{tab:stat:MSE} and Fig.~\ref{fig:stat:MSE}, the quantile approximations based on subsampling are always better in terms of MSE than those based on the empirical bootstrap. They are similar (resp.\ slightly worse) than those based on the multliplier bootstrap for the Cramér--von Mises (resp.\ Kolmogorov--Smirnov) functional. 

An inspection of the more comprehensive simulation results presented in the supplementary material reveals that the subsampling approximations of the high quantiles of the Kolmogorov--Smirnov functional generally improve if $b$ is chosen smaller than $\ip{0.28n}$. For $b = \ip{0.1n}$ for instance, the approximations based on subsampling turn out to be comparable, overall, to those obtained using the multiplier bootstrap.

\begin{figure}[t!]
\begin{center}
  \includegraphics*[width=1\linewidth]{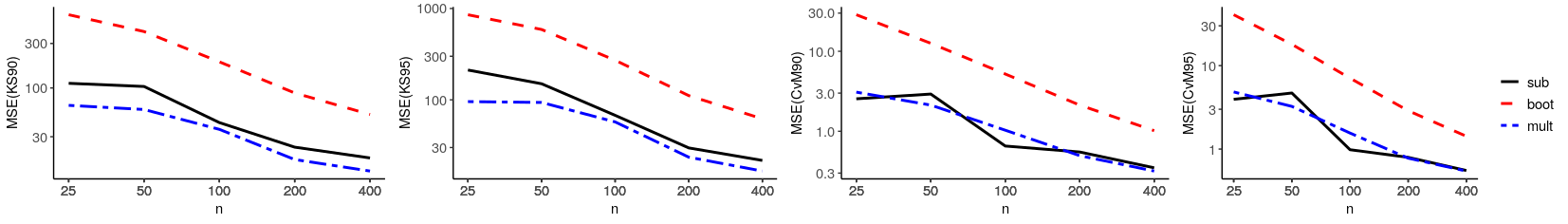}
  \caption{\label{fig:stat:MSE} Empirical MSEs ($\times 10^4$) of estimators of the 90\% and 95\%-quantiles of $KS(\hat \Cb_n)$ and $CvM(\hat \Cb_n)$ based on subsampling (sub), empirical bootstrap (boot) and multiplier bootstrap (mult) approximations of $\hat \Cb_n$ against the sample size $n$, for $d=4$, $C$ a Gumbel--Hougaard copula and $\tau = 0.66$.}
\end{center}
\end{figure}

\paragraph{Subsampling approximations of the smooth empirical copula processes}

To investigate the finite-sample performance of subsampling approximations of $\Cb_n^\#$ in~\eqref{eq:Cbn:hash} and $\Cb_n^\beta$ in~\eqref{eq:Cbn:beta}, we considered the same setting as in the first experiment. The goal was thus to estimate the covariances of $\Cb_n^\#$ and $\Cb_n^\beta$ at the points in $P$. Because these empirical copula processes should be closer and closer to $\hat \Cb_n$ in~\eqref{eq:hat:Cbn} as $n$ increases and given the high evaluation cost of the empirical beta copula $C_n^\beta$ defined in~\eqref{eq:Cn:beta}, the experiment was restricted to $n \in \{25,50,100\}$. For all bivariate $C$ and $\tau$, the average of the 10 MSEs is plotted against the sample size $n$ in Fig.~\ref{fig:cov:MSE:Smooth} for each of the three target processes $\hat \Cb_n$, $\Cb_n^\#$ and $\Cb_n^\beta$. As one can see, as $n$ increases, the three mean MSEs decrease and become closer and closer, as expected.

\begin{figure}[t!]
\begin{center}
  \includegraphics*[width=1\linewidth]{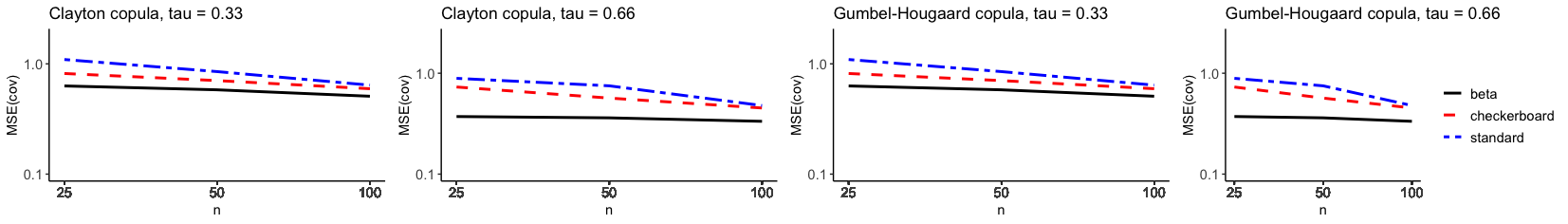}
  \caption{\label{fig:cov:MSE:Smooth} Averages of the empirical MSEs ($\times 10^4$) of subsampling-based estimators of the covariances of $\hat \Cb_n$, $\Cb_n^\#$ and $\Cb_n^\beta$ at the points $P = \{(i/3,j/3):i,j=1,2\}$ against the sample size $n$ for $C$ a Clayton or Gumbel--Hougaard copula and $\tau \in \{0.33,0.66\}$.}
\end{center}
\end{figure}

\subsection{Subsampling for time series}

To investigate the finite-sample performance of subsampling for approximating the studied empirical copula processes in a time series context, we considered a simple autoregressive model. Samples $\bm \Xc_n$ were generated as follows: Given a random sample $\bm U_i$, $i \in \{-100,\dots,0,\dots,n\}$, from a $d$-dimensional copula $C$, we formed the sample $\bm \epsilon_i = (\Phi^{-1}(U_{i1}),\dots,\Phi^{-1}(U_{id}))$, where $\Phi$ is the d.f.\ of the standard normal distribution, and set $\bm X_{-100} = \bm \epsilon_{-100}$. Next, given an autoregressive coefficient $\beta \in [0,1)$, we computed recursively
\begin{equation*}
X_{ij} = \beta X_{i-1,j} + \epsilon_{ij}, \qquad i \in \{-99,\dots,0,\dots,n\}, \, j \in \{1,\dots,d\},
\end{equation*}
and returned $\bm X_1,\dots, \bm X_n$.

Recall that, given such stretches $\bm \Xc_n$ from stationary time series and a subsample size $b < n$, subsamples $\bm \Xc_b^{[m]}$, $m \in \{1,\dots,N_{b,n}\}$, $N_{b,n} = n - b +1$, are restricted to consecutive observations to preserve serial dependence: they are of the form $\bm X_m,\dots,\bm X_{m+b-1}$.

Our experiments consisted of investigating the influence of the subsample size $b$ on the empirical MSEs of subsampling estimators of the 90\% and 95\%-quantiles of $KS(\hat \Cb_n)$ and $CvM(\hat \Cb_n)$. Fig.~\ref{fig:ts:b:MSE} displays such emiprical MSEs against $b$ for $C$ a bivariate Gumbel--Hougaard copula, $\tau = 0.33$, $\beta \in \{0,0.33,0.66\}$ and $n \in \{50,100,200\}$. An inspection of the $y$-axes of the graphs reveals that the MSEs increase with $\beta$ for $b$ and $n$ fixed, thereby empirically confirming that the stronger the serial dependence in the observations, the harder the estimation of the quantiles. In a related way, focusing on the curves for $n=200$, one can further notice that they are overall $u$-shaped and that their minima appear to shift to the right as $\beta$ increases, thereby empirically confirming the fact that, for fixed $n$, the ``optimal'' $b$ is expected to increase as the strength of the serial dependence increases.

As the setting $\beta = 0$ amounts to generating i.i.d.\ samples $\bm \Xc_n$, we finally aimed at empirically verifying that the aforementioned MSEs should be larger, overall, than if subsamples were formed by simply sampling without replacement from $\bm \Xc_n$ as in the case of i.i.d.\ observations. This is confirmed by Fig.~\ref{fig:ts:iid:MSE} which reports the empirical MSEs against $n$, for $C$ a bivariate Gumbel--Hougaard copula, $\tau = 0.33$ and $b \in \{3,7,11,15,19,23,27\}$. The solid lines give the empirical MSEs obtained when the subsampling is not restricted to consecutive observations. The results are not qualitatively different for other bivariate $C$ and $\tau$.

\begin{figure}[t!]
\begin{center}
   \includegraphics*[width=1\linewidth]{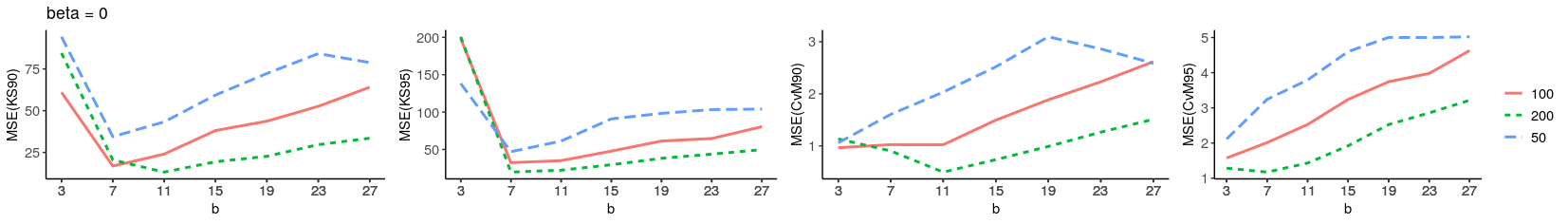}
  \includegraphics*[width=1\linewidth]{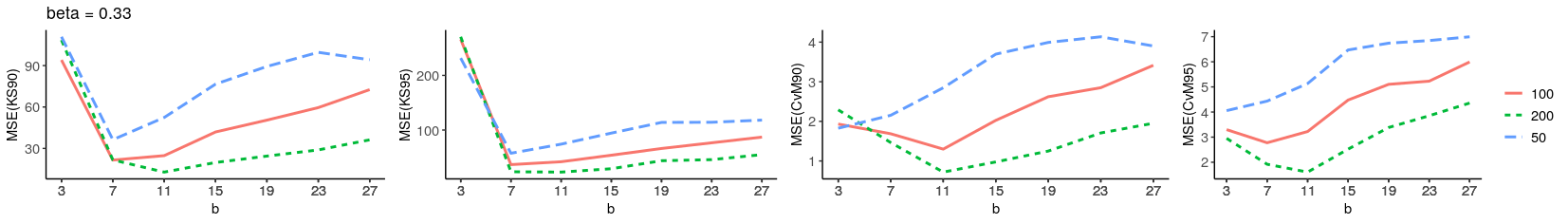}
  \includegraphics*[width=1\linewidth]{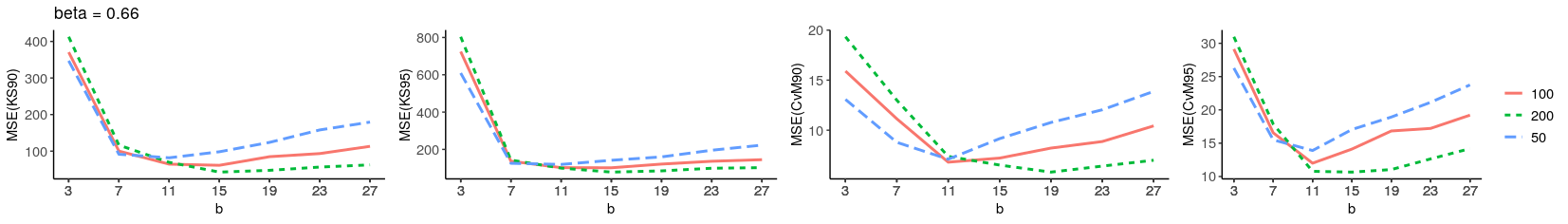}
  \caption{\label{fig:ts:b:MSE} Empirical MSEs ($\times 10^4$) of subsampling-based estimators of the 90\% and 95\%-quantiles of $KS(\hat \Cb_n)$ and $CvM(\hat \Cb_n)$ against $b$, for $C$ a bivariate Gumbel--Hougaard copula, $\tau = 0.33$, $n \in \{50,100,200\}$ and $\beta \in \{0,0.33,0.66\}$.}
\end{center}
\end{figure}

\begin{figure}[t!]
\begin{center}
  \includegraphics*[width=1\linewidth]{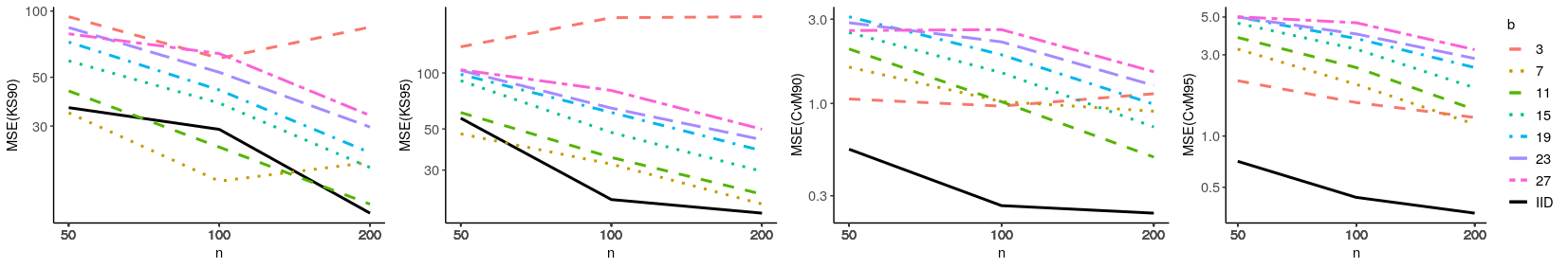}

  \caption{\label{fig:ts:iid:MSE} Empirical MSEs ($\times 10^4$) of subsampling-based estimators of the 90\% and 95\%-quantiles of $KS(\hat \Cb_n)$ and $CvM(\hat \Cb_n)$ against $n$, for $C$ a bivariate Gumbel--Hougaard copula, $\tau = 0.33$, $\beta = 0$ and $b \in \{3,7,11,15,19,23,27\}$. The solid lines give the empirical MSEs obtained in the case of subsampling for i.i.d.\ observations.}
\end{center}
\end{figure}

\section{Concluding remarks}
\label{sec:conc}

Relying on key results of \citet{PraWel93} in the i.i.d.\ case and \citet{PolRomWol99} in the time series case (under short range dependence), the asymptotic validity of subsampling was shown for various (weighted, smooth) empirical copula processes under minimalistic conditions. The results for the weighted empirical checkerboard and beta copula processes build up on the seminal work of \citet{BerBucVol17} and \citet{BerSeg18}, and seem to constitute first asymptotic validity results for bootstrapping these processes.

From a practical perspective, based on our numerous Monte Carlo experiments, we recommend to always use centered corrected subsample replicates as in~\eqref{eq:hat:Cbb:fsc:centered}, and, in the i.i.d.\ case, to consider the value $\ip{0.28n}$ as a starting choice for the subsample size $b$, as suggested by \citet[Section~4]{Wu90} in the case of the delete-$h$ jackknife for the mean. We were actually rather surprised not to find any mention of this proposal in the literature as it appears to be a rather natural initial choice when subsampling statistics or empirical processes converging weakly to Gaussian limits. Specifically, in our Monte Carlo experiments, the setting $b = \ip{0.28n}$ frequently lead to the best estimations of high quantiles of Cramér--von Mises functionals of the (standard) empirical copula process, while the setting $b = \ip{0.1n}$ was found to be better for Kolmogorov--Smirnov functionals. Overall, with $b \in \{\ip{0.1n}, \ip{0.28n}\}$, we observed the subsampling approximation of the (standard) empirical copula process to behave substantially better than its empirical bootstrap approximation, and to be roughly equivalent to its multiplier bootstrap approximation. As a consequence, subsampling appears as a natural, easier-to-implement alternative to the multiplier bootstrap in copula inference procedures in the i.i.d.\ case. Furthermore, as subsamples do not contain ties, it is of particular interest when dealing with statistics that can be expressed as functionals of (weighted) smooth empirical copula processes, given that the computation of such statistics is not fully meaningful in the presence of ties.

In the time series case, the choice of $b$ remains an open problem in the subsampling literature. Several practical solutions, of a more or less heuristic nature, are discussed in \cite[Chapter~9]{PolRomWol99b} and \cite{GluZihPol05}, and could be adapted to the copula inference setting under consideration. Once an efficient rule is found, it will be of practical interest to compare the resulting subsampling approximation of the empirical copula process $\hat \Cb_n$ in~\eqref{eq:hat:Cbn} with its dependent multiplier approximation as proposed in \cite{BucKoj16}.



\section{Appendix: Proofs}
\label{app:proofs}

\begin{lem}
\label{lem:ae:Cn:hash}
Assume that Condition~\ref{cond:ties} holds. Then, almost surely,
\begin{equation*}
\sup_{\bm u \in [0,1]^d} | C_n^\#(\bm u) - \hat C_n(\bm u) | \leq \frac{d}{n}.
\end{equation*}
\end{lem}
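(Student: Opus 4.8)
The plan is to reduce the uniform bound to a coordinatewise, summand-by-summand comparison, and then to exploit that, under Condition~\ref{cond:ties}, the ranks $R_{1j,n},\dots,R_{nj,n}$ form a permutation of $\{1,\dots,n\}$ for each fixed $j$.

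First I would write both empirical copulas as averages over $i$ of products over $j$ of factors valued in $[0,1]$: for $\hat C_n$ in~\eqref{eq:hat:Cn} the $(i,j)$ factor is $\psi_{ij}(u_j)=\1(R_{ij,n}/n\le u_j)$, and for $C_n^\#$ in~\eqref{eq:Cn:hash} it is $\phi_{ij}(u_j)=\min\{\max\{nu_j-R_{ij,n}+1,0\},1\}$. A direct inspection shows that $\phi_{ij}$ and $\psi_{ij}$ agree outside the half-open interval $[(R_{ij,n}-1)/n,\,R_{ij,n}/n)$: both equal $0$ when $u_j\le(R_{ij,n}-1)/n$ and both equal $1$ when $u_j\ge R_{ij,n}/n$, whereas on the interior of that interval $\psi_{ij}=0$ and $\phi_{ij}=nu_j-R_{ij,n}+1\in(0,1)$. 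Hence $|\phi_{ij}(u_j)-\psi_{ij}(u_j)|\le 1$, with the difference vanishing unless $u_j\in[(R_{ij,n}-1)/n,R_{ij,n}/n)$.

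Next I would apply the elementary inequality $|\prod_{j=1}^d a_j-\prod_{j=1}^d b_j|\le\sum_{j=1}^d|a_j-b_j|$, valid for $a_j,b_j\in[0,1]$, to each summand, yielding
\[
|C_n^\#(\bm u)-\hat C_n(\bm u)|\le\frac1n\sum_{j=1}^d\sum_{i=1}^n|\phi_{ij}(u_j)-\psi_{ij}(u_j)|.
\]
The decisive step is to fix $j$ and $u_j$ and bound the inner sum over $i$. Under Condition~\ref{cond:ties} the map $i\mapsto R_{ij,n}$ is a bijection onto $\{1,\dots,n\}$, so the intervals $[(r-1)/n,r/n)$, $r\in\{1,\dots,n\}$, partition $[0,1)$; any fixed $u_j\in[0,1)$ therefore lies in exactly one of them, and only the unique index $i$ with $R_{ij,n}$ equal to that $r$ can contribute a nonzero term, of size at most $1$ (the case $u_j=1$ being trivial since both factors then equal $1$). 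Thus the inner sum is at most $1$ for every $j$, the double sum is at most $d$, and taking the supremum over $\bm u\in[0,1]^d$ gives the claimed bound $d/n$.

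The argument is deterministic on the (almost sure) event that no component sample contains ties, which is why the conclusion is stated to hold almost surely. I do not anticipate a genuine obstacle: the telescoping inequality and the piecewise description of the two factors are routine, and the only real content — the collapse of the sum over $i$ to a single active term per coordinate — is an immediate consequence of the permutation property of the ranks ensured by Condition~\ref{cond:ties}.
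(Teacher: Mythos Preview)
Your argument is correct and uses the same two ingredients as the paper --- the telescoping inequality $|\prod_j a_j-\prod_j b_j|\le\sum_j|a_j-b_j|$ for $a_j,b_j\in[0,1]$ and the fact that, under Condition~\ref{cond:ties}, the ranks $R_{1j,n},\dots,R_{nj,n}$ form a permutation of $\{1,\dots,n\}$ --- but the order in which you apply them is sharper. You fix $\bm u$, bound the difference by $\frac1n\sum_j\sum_i|\phi_{ij}(u_j)-\psi_{ij}(u_j)|$, and then use the partition property of the intervals $[(r-1)/n,r/n)$ to see that, for each $j$, at most one index $i$ contributes; this collapses the inner sum to $\le 1$ and yields $d/n$. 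The paper instead pulls the supremum over $u_j$ inside before summing over $i$, arriving at $\frac{d}{n}\sum_{i=1}^n\sup_{u\in[0,1]}|\1(i/n\le u)-\bm H_{n,i}(u)|$; but each of these suprema is equal to $1$ (attained as $u\uparrow i/n$), so that expression equals $d$, not $d/n$. Your ``one active index per coordinate'' step is exactly what is needed to obtain the stated bound, and it makes your write-up the cleaner of the two.
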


\begin{proof}[\bf Proof of Lemma~\ref{lem:ae:Cn:hash}]
  Let $\bm H_{n,r}(u) = \min\{ \max\{n u - r + 1, 0 \}, 1 \}$, $u \in [0,1]$, $r \in \{1,\dots,n\}$. With probability one, we have that
  \begin{align*}
    \sup_{\bm u \in [0,1]^d} | C_n^\#(\bm u) - \hat C_n(\bm u) | &\leq  \sup_{\bm u \in [0,1]^d}  \left| \frac{1}{n} \sum_{i=1}^n \prod_{j=1}^d  \1(R_{ij,n}/n \leq u_j) - \frac{1}{n} \sum_{i=1}^n \prod_{j=1}^d \bm H_{n,R_{ij,n}}(u_j) \right| \\
                                                            &\leq  \sup_{\bm u \in [0,1]^d} \frac{1}{n} \sum_{i=1}^n \left| \prod_{j=1}^d  \1(R_{ij,n}/n \leq u_j) -  \prod_{j=1}^d \bm H_{n,R_{ij,n}}(u_j) \right| \\
                                                            & \leq \sup_{\bm u \in [0,1]^d} \frac{1}{n} \sum_{i=1}^n  \sum_{j=1}^d \left| \1(R_{ij,n}/n \leq u_j) - \bm H_{n,R_{ij,n}}(u_j) \right| \\
                                                            & \leq \frac{1}{n} \sum_{j=1}^d \sup_{u \in [0,1]} \sum_{i=1}^n   \left| \1(R_{ij,n}/n \leq u) - \bm H_{n,R_{ij,n}}(u) \right| \\
                                                            & \leq \frac{1}{n} \sum_{j=1}^d \sup_{u \in [0,1]} \sum_{i=1}^n   \left| \1(i/n \leq u) - \bm H_{n,i}(u) \right| \\
                                                            & \leq \frac{d}{n} \sup_{u \in [0,1]}  \sum_{i=1}^n   \left| \1(i/n \leq u) - \bm H_{n,i}(u) \right| \\
                                                            & \leq \frac{d}{n} \max_{k \in \{1,\dots,n\}} \sup_{\frac{k-1}{n} \leq u \leq \frac{k}{n}}  \sum_{i=1}^n   \left| \1(i/n \leq u) - \bm H_{n,i}(u) \right| \\
                                                            & \leq \frac{d}{n} \max_{k \in \{1,\dots,n\}}  \sum_{i=1}^n \sup_{\frac{k-1}{n} \leq u \leq \frac{k}{n}}   \left| \1(i/n \leq u) - \bm H_{n,i}(u) \right| \\
                                                            & \leq \frac{d}{n} \max_{k \in \{1,\dots,n\}}  \sup_{\frac{k-1}{n} \leq u \leq \frac{k}{n}}   \left| \1(k/n \leq u) - \bm H_{n,k}(u) \right| \leq \frac{d}{n}.
  \end{align*}
\end{proof}

\begin{proof}[\bf Proof of Theorem~\ref{thm:sub:Cbn}]
Recall that $G_n$ is the empirical d.f.\ of the unobservable sample $\bm U_1,\dots,\bm U_n$ obtained from $\bm \Xc_n$ by~\eqref{eq:Ui}. For $m \in \{1,\dots,N_{b,n}\}$, let $\bm \Uc_b^{[m]}$ be the subsample of $\bm U_1,\dots,\bm U_n$ of size $b$ obtained from $\bm \Xc_b^{[m]}$ by the same marginal probability integral transformations. Furthermore, let $G_{n1},\dots,G_{nd}$ denote the univariate margins of $G_n$. It is well-known \citep[see, e.g.,][]{Seg12} that the empirical copula of $\bm \Xc_n$ can then be equivalently written as
$$
C_n(\bm u) = G_n\{G_{n1}^{-}(u_1),\dots,G_{nd}^{-}(u_d) \}, \qquad \bm u \in [0,1]^d,
$$
where $G_{nj}^{-}(v) = \inf\{u \in [0,1] : G_{nj}(u) \geq v \}$, $v \in [0,1]$, $j \in \{1,\dots,d\}$. For $m \in \{1,\dots,N_{b,n}\}$, let $G_b^{[m]}$ be the empirical d.f.\ of $\bm \Uc_b^{[m]}$ and let
$$
C_b^{[m]}(\bm u) = G_b^{[m]}\{G_{b1}^{[m],-}(u_1),\dots,G_{bd}^{[m],-}(u_d) \}, \qquad \bm u \in [0,1]^d,
$$
be the empirical copula of $\bm \Xc_b^{[m]}$. Furthermore, recall the definition of $\Gb_b^{[m]}$ in~\eqref{eq:Gbm} and that Condition~\ref{cond:Gbn} is assumed to hold.

\textbf{Proof of~\eqref{eq:sub:Cbn:fsc}:} 
From Theorem~2.2 and Example 3.6 in \cite{PraWel93} \citep[see also][Example~3.6.14]{vanWel96}, we have that, informally, ``$(1 - b/n)^{-1/2} \Gb_b^{[I_{1,n}]}$ converges weakly to $\Gb_C$ in $\ell^\infty([0,1]^d)$ conditionally on $\Xc_n$ in probability''; as mentioned earlier, see, e.g., Section~2.2.3 in \cite{Kos08} for a precise definition of that mode of convergence. From Lemma~3.1 in \cite{BucKoj18}, the latter is equivalent to
\begin{equation}
\label{eq:wc1}
(\Gb_n,(1 - b/n)^{-1/2} \Gb_b^{[I_{1,n}]},(1 - b/n)^{-1/2} \Gb_b^{[I_{2,n}]}) \leadsto (\Gb_C,\Gb_C^{[1]},\Gb_C^{[2]}),
\end{equation}
in $\{ \ell^\infty([0,1]^d) \}^3$, where $\Gb_C^{[1]}$ and $\Gb_C^{[2]}$ are independent copies of $\Gb_C$.

Let $\Db \subset \ell^\infty([0,1]^d)$ be the set of all d.f.s $H$ on $[0,1]^d$ whose univariate margins $H_j$, $j \in \{1,\dots,d\}$, satisfy $H_j(0) = 0$. Then, let $\Phi$ be the map from $\Db$ to $\ell^\infty([0,1]^d)$ defined by
\begin{equation}
\label{eq:Phi}
\Phi(H)(\bm u) = H \{ H_1^{-}(u_1),\dots,H_d^{-}(u_d) \}, \qquad \bm u \in [0,1]^d,
\end{equation}
where $H_{j}^{-}(v) = \inf\{u \in [0,1] : H_{j}(u) \geq v \}$, $v \in [0,1]$, $j \in \{1,\dots,d\}$. Since Condition~\ref{cond:partial:C} is assumed to hold, we have, from Theorem~2.4 of \cite{BucVol13}, that $\Phi$ is Hadamard-differentiable at $C$ tangentially to $\Cc_0$ given in~\eqref{eq:Cc0} with derivative
\begin{equation}
\label{eq:Phi'}
\Phi'_C(f)(\bm u) = f(\bm u) - \sum_{j=1}^d \dot C_j(\bm u) f(\bm u^{(j)}), \qquad \bm u \in [0,1]^d, \qquad f \in \ell^\infty([0,1]^d).
\end{equation}
Notice that $\Phi'_C$ is actually continuous on the whole of $\ell^\infty([0,1]^d)$ since $0 \leq \dot C_j(\bm u) \leq 1$, $\bm u \in [0,1]^d$, $j \in \{1,\dots,d\}$. Starting from~\eqref{eq:wc1} and using the continuous mapping theorem, we obtain that
$$
\Big( \Phi'_C(\Gb_n),\Phi'_C((1 - b/n)^{-1/2} \Gb_b^{[I_{1,n}]}),\Phi'_C((1 - b/n)^{-1/2} \Gb_b^{[I_{2,n}]}) \Big) \leadsto \Big(\Phi'_C(\Gb_C),\Phi'_C(\Gb_C^{[1]}),\Phi'_C(\Gb_C^{[2]})\Big) = (\Cb_C,\Cb_C^{[1]},\Cb_C^{[2]}),
$$
in $\{ \ell^\infty([0,1]^d) \}^3$. To prove~\eqref{eq:sub:Cbn:fsc}, it thus remains to show that
\begin{align}
\label{eq:first}
&\sup_{\bm u \in [0,1]^d} | \Cb_n(\bm u) - \Phi'_C(\Gb_n)(\bm u) | \p 0, \\
\label{eq:second}
&\sup_{\bm u \in [0,1]^d} | (1 - b/n)^{-1/2} \Cb_b^{[I_{m,n}]}(\bm u) - \Phi'_C\big((1 - b/n)^{-1/2} \Gb_b^{[I_{m,n}]}\big)(\bm u) | \p 0, \qquad m \in \{1,2\}.
\end{align}
Starting from Condition~\ref{cond:Gbn} and applying the delta method \citep[see][Theorem~3.9.4]{vanWel96} with the map $\Phi$ in~\eqref{eq:Phi}, we obtain that
\begin{equation}
\label{eq:delta}
\sup_{\bm u \in [0,1]^d} | \sqrt{n} \{ \Phi(G_n)(\bm u) - \Phi(C)(\bm u) \}  - \Phi'_C(\Gb_n)(\bm u) | \p 0,
\end{equation}
which is exactly~\eqref{eq:first}. 
Since $\Phi'_C$ is linear, by the triangle inequality,~\eqref{eq:second} is proven if
\begin{align}
\label{eq:third}
&\sup_{\bm u \in [0,1]^d} | \sqrt{b} \{ \Phi(G_b^{[I_{m,n}]})(\bm u) - \Phi(C)(\bm u) \} - \Phi'_C \big( \sqrt{b} ( G_b^{[I_{m,n}]} - C) \big) (\bm u)  | \p 0, \quad m \in \{1,2\},\\
\label{eq:fourth}
&\sup_{\bm u \in [0,1]^d} | \sqrt{b} \{ \Phi(G_n)(\bm u) - \Phi(C)(\bm u) \} - \Phi'_C \big( \sqrt{b} ( G_n - C) \big) (\bm u)  | \p 0.
\end{align}
The convergence in~\eqref{eq:fourth} then immediately follows from~\eqref{eq:delta}. To show~\eqref{eq:third}, we start from~\eqref{eq:wc1} and use the continuous mapping theorem to obtain that
\begin{equation}
\label{eq:wc:Gbm}
\sqrt{b} (G_b^{[I_{m,n}]} - C) = 
\Gb_b^{[I_{m,n}]} + \sqrt{b/n} \, \Gb_n \leadsto \sqrt{1 - \alpha} \, \Gb_C^{[m]} + \sqrt{\alpha} \, \Gb_C
\end{equation}
in $\ell^\infty([0,1]^d)$, for $m \in \{1,2\}$. Note in passing that $\sqrt{b} (G_b^{[I_{m,n}]} - C)$ and $\Gb_b = \sqrt{b} (G_b - C)$ are equal in distribution and so are their weak limits: the limiting process $\sqrt{1 - \alpha} \, \Gb_C^{[m]} + \sqrt{\alpha} \, \Gb_C$ is a tight, centered Gaussian process concentrated on $\Cc_0$ in~\eqref{eq:Cc0} whose covariance can be verified to be the same as the one of $\Gb_C$. The weak convergence in~\eqref{eq:wc:Gbm} can thus be combined with the delta method based on the map $\Phi$ in~\eqref{eq:Phi} to obtain~\eqref{eq:third}, which completes the proof of (i). 

\textbf{Proof of~\eqref{eq:sub:Cbn}:} We thus assume the s.m.\ setting and that $b / n \to 0$. In essence, the asymptotic validity of the subsampling methodology in this case is merely a consequence of Theorem~3.1 in \cite{PolRomWol99}. The proof below uses Theorem~4.1 in \cite{PolRomWol99} instead (a corollary of the aforementioned theorem), which will eventually allow us to conveniently apply Lemma~2.2 in~\cite{BucKoj18} to state the asymptotic validity under the form of a joint weak convergence with two subsample replicates.
  
Let $D([0,1]^d)$ be the space of \emph{càdlàg} functions on $[0,1]^d$ equipped with the Skorohod metric~$d_S$ that makes $(D([0,1]^d), d_S)$ separable and complete \citep[see, e.g.,][Chapter~3 for the case $d=1$]{Bil99}. Since $d_S$ is a weaker metric than the uniform metric, by the continuous mapping theorem, Condition~\ref{cond:Gbn} implies that $\Gb_n$ converges weakly to $\Gb_C$ in $D([0,1]^d)$ as well. Recall next that weak convergence in separable metric spaces can be metrized using the bounded Lipschitz metric; see, e.g., \citet[Theorem 11.3.3]{Dud02}, \citet[Section~2]{DumZer13} or \citet[Lemma 2.4]{BucKoj18}. The bounded Lipschitz metric $d_\BL$ between probability measures $P,Q$ on $D([0,1]^d)$ equipped with the Borel sigma field is defined by
\[
d_\BL(P,Q) = \sup_{f \in \BL_1(D([0,1]^d))} \big| \textstyle \int f \dd P - \int f \dd Q \big|,
\]
where $\BL_1(D([0,1]^d))$ denotes the set of functions $h:D([0,1]^d) \to [-1,1]$ such that $|h(x) - h(y)| \leq d_S(x,y)$ for all $x,y \in D([0,1]^d)$. Hence, denoting by $P^{\Gb_n}$ and $P^{\Gb_C}$ the probability measures of $\Gb_n$ and $\Gb_C$, respectively, the weak convergence of $\Gb_n$ to $\Gb_C$ in $D([0,1]^d)$ can be equivalently expressed as
\begin{equation}
\label{eq:BL1}
d_\BL(P^{\Gb_n},P^{\Gb_C}) \to 0.
\end{equation}

Let $P_{N_{b,n}}^{\Gb_n} = \frac{1}{N_{b,n}} \sum_{m=1}^{N_{b,n}} \delta_{\Gb_b^{[m]}}$ be the empirical measure of the $N_{b,n}$ subsample replicates $\Gb_b^{[1]},\dots,\Gb_b^{[N_{b,n}]}$ of $\Gb_n$.  Furthermore, let $M \in \N$ and let $I_{1,n},\dots,I_{M,n}$ be independent random variables, independent of $\Xc_n$ and uniformly distributed on the set $\{1,\dots,N_{b,n}\}$. Then, let $\hat P_{M}^{\Gb_n} = \frac{1}{M} \sum_{m=1}^{M} \delta_{\Gb_b^{[I_{m,n}]}}$ be the empirical measure of the $M$ subsample replicates $\Gb_b^{[I_{1,n}]},\dots,\Gb_b^{[I_{M,n}]}$, and note that $\hat P_{M}^{\Gb_n}$ is a \emph{random} probability measure on $D([0,1]^d)$ \cite[see, e.g.,][Section~2]{DumZer13}.

Next, let $f$ be a bounded and continuous function on $D([0,1]^d)$. For any $n \in \N$ and $\eps > 0$, since $\Gb_b^{[I_{1,n}]},\dots,\Gb_b^{[I_{M,n}]}$ are conditionally independent given~$\bm \Xc_n$, we have
\begin{align*}
\Pr\left( \left| \int f \, \dd \hat P_{M}^{\Gb_n} - \int f \, \dd P_{N_{b,n}}^{\Gb_n} \right| > \eps \right) &= \Pr\left\{ \left| \frac{1}{M} \sum_{m=1}^M f(\Gb_b^{[I_{m,n}]}) - \frac{1}{N_{b,n}} \sum_{m=1}^{N_{b,n}} f(\Gb_b^{[m]}) \right| > \eps \right\} \\
                                                                                                                 &= \Ex\left( \Pr\left[ \left| \frac{1}{M} \sum_{m=1}^M f(\Gb_b^{[I_{m,n}]}) - \Ex \{ f(\Gb_b^{[I_{1,n}]}) \} \right| > \eps \, \big| \, \bm \Xc_n \right] \right) \\
&\leq \frac{\Ex [ \Var \{ f(\Gb_b^{[I_{1,n}]}) \, | \, \bm \Xc_n\} ]}{\eps^2 M} \leq \frac{K}{\eps^2 M},
\end{align*}
by Chebychev's inequality and where $K$ is a bound on $f$. As a consequence,
\begin{equation}
\label{eq:BL2}
\int f \, \dd \hat P_{M}^{\Gb_n} - \int f \, \dd P_{N_{b,n}}^{\Gb_n} \p 0 \quad \text{as} \quad n,M \to \infty.
\end{equation}
Since Condition~\ref{cond:Gbn} is assumed to hold, from Theorem~4.1 of \cite{PolRomWol99}, we have that $d_\BL(P_{N_{b,n}}^{\Gb_n},P^{\Gb_n}) \to 0$, which, by the triangle inequality and~\eqref{eq:BL1} implies that $d_\BL(P_{N_{b,n}}^{\Gb_n},P^{\Gb_C}) \to 0$. The latter convergence further implies, for instance, by Lemma~2.4 in \cite{BucKoj18}, that $\int f \, \dd P_{N_{b,n}}^{\Gb_n} - \int f \, \dd P^{\Gb_C} \to 0$, which, combined with~\eqref{eq:BL2}, gives that $\int f \, \dd \hat P_{M}^{\Gb_n} - \int f \, \dd P^{\Gb_C} \p 0$ as $n,M \to 0$. Since $f$ was arbitrary, using, for instance, Lemma~2.5 in \cite{BucKoj18}, the previous convergence is equivalent to $d_\BL(\hat P_{M}^{\Gb_n},P^{\Gb_C}) \p 0$ as $n,M \to 0$, which can be combined with~\eqref{eq:BL1} to obtain that
\begin{equation}
\label{eq:BL3}
d_\BL(\hat P_{M}^{\Gb_n},P^{\Gb_n}) \p 0 \quad \text{as} \quad n,M \to \infty.
\end{equation}
By Lemma~2.2 in~\cite{BucKoj18}, the convergence in~\eqref{eq:BL3} is equivalent to the weak convergence of $(\Gb_n, \Gb_b^{[I_{1,n}]},\Gb_b^{[I_{2,n}]})$ to $(\Gb_C,\Gb_C^{[1]},\Gb_C^{[2]})$ in $\{D([0,1]^d)\}^3$, where $\Gb_C^{[1]}$ and $\Gb_C^{[2]}$ are independent copies of $\Gb_C$. Since the sample paths of the weak limit are (uniformly) continuous almost surely, the previous weak convergence occurs also in $\{ \ell^\infty([0,1]^d) \}^3$; see, e.g., \citet[Chapter~3]{Bil99} for the case $d=1$. The convergence in~\eqref{eq:sub:Cbn} then follows from the delta method based on the map $\Phi$ as in the proof of~\eqref{eq:sub:Cbn:fsc} but with $b/n \to \alpha = 0$.

\textbf{Proof of~\eqref{eq:sub:tilde:Cbn}:} The result is a straightforward consequence of~\eqref{eq:ae:Cn:tilde}. Indeed, from the latter asymptotic equivalence, we have that $\sup_{\bm u \in [0,1]^d} \sqrt{b} |C_b(\bm u) - \tilde C_b(\bm u) |$ converges in probability to zero. Since $(C_b, \tilde C_b)$ and $(C_b^{[I_{m,n}]}, \tilde C_b^{[I_{m,n}]})$, $m \in \{1,2\}$, are equal in distribution, we obtain that
  $$
  \sup_{\bm u \in [0,1]^d} \sqrt{b} |C_b^{[I_{m,n}]}(\bm u) - \tilde C_b^{[I_{m,n}]}(\bm u) | \p 0, \qquad m \in \{1,2\}.
  $$
  The desired result then follows from~\eqref{eq:sub:Cbn},~\eqref{eq:ae:Cn:tilde} and the previous display.

\textbf{Proofs of~\eqref{eq:sub:hat:Cbn:fsc} and~\eqref{eq:sub:hat:Cbn}:} The results immediately follow from~\eqref{eq:ae:Cn} using similar arguments.

\textbf{Proofs of~\eqref{eq:sub:Cbn:hash:fsc} and~\eqref{eq:sub:Cbn:hash}:} The results are direct consequences of Lemma~\ref{lem:ae:Cn:hash}, again, using similar arguments.

\textbf{Proofs of~\eqref{eq:sub:Cbn:beta:fsc} and~\eqref{eq:sub:Cbn:beta}:} We only prove~\eqref{eq:sub:Cbn:beta:fsc}, the proof of~\eqref{eq:sub:Cbn:beta} being simpler. We combine~\eqref{eq:third} and~\eqref{eq:wc:Gbm} to obtain that, for $m \in \{1,2\}$, $\sqrt{b} (C_b^{[I_{m,n}]} - C)$ converges weakly in $\ell^\infty([0,1]^d)$ to a limit process whose trajectories are continuous, almost surely. From~\eqref{eq:ae:Cn}, we immediately have that the same weak convergence occurs for the process $\sqrt{b} (\hat C_b^{[I_{m,n}]} - C)$. Using the fact that $\big(\sqrt{b} (C_b^{\beta,[I_{m,n}]} - C), \sqrt{b} (\hat C_b^{[I_{m,n}]} - C) \big)$ and $\big(\sqrt{b} (C_b^{\beta} - C), \sqrt{b} (\hat C_b - C) \big)$ are equal in distribution, we obtain from Corollary~3.7 in \cite{SegSibTsu17} that
\begin{equation}
\label{eq:ae1}
\sqrt{b} (C_b^{\beta,[I_{m,n}]} - C) = \sqrt{b} (\hat C_b^{[I_{m,n}]} - C) + o_\Pr(1), \qquad m \in \{1,2\}.
\end{equation}
From~\eqref{eq:sub:hat:Cbn:fsc}, we can apply the same corollary to obtain that
\begin{equation}
\label{eq:ae2}
\Cb_n^\beta = \hat \Cb_n + o_\Pr(1),
\end{equation}
which implies that
\begin{equation}
\label{eq:ae3}
\sqrt{b} (C_n^\beta - C) = \sqrt{b} (\hat C_n - C) + o_\Pr(1).
\end{equation}
Combining~\eqref{eq:ae1} and~\eqref{eq:ae3}, we obtain that
\begin{equation}
\label{eq:ae4}
\Cb_b^{\beta,[I_{m,n}]} = \hat \Cb_b^{[I_{m,n}]} + o_\Pr(1), \qquad m \in \{1,2\}.
\end{equation}
The weak convergence in~\eqref{eq:sub:Cbn:beta:fsc} is then an immediate consequence of~\eqref{eq:sub:hat:Cbn:fsc},~\eqref{eq:ae2} and~\eqref{eq:ae4}.
\end{proof}

\begin{lem}
\label{lem:sub:Cbn:weighted}
Assume that Conditions~\ref{cond:partial:C} and~\ref{cond:partial2:C} hold and that $b = b_n \to \infty$. Also, let $I_{1,n}$ and $I_{2,n}$ be independent random variables, independent of $\Xc_n$ and uniformly distributed on the set $\{1,\dots,N_{b,n}\}$. Then, under the i.i.d.\ setting with $b/n \to \alpha \in [0,1)$, or the s.m.\ setting and Condition~\ref{cond:mixing} with $b/n \to 0$, there holds, for any $\omega \in [0,1/2)$,
\begin{multline}
\label{eq:intermediate}
\Big( \Phi'_C(\Gb_n) / g^\omega, \Phi'_C(\sqrt{b} (G_b^{[I_{1,n}]} - C)) / g^\omega, \Phi'_C(\sqrt{b} (G_b^{[I_{2,n}]} - C)) / g^\omega \Big)
 \\ \leadsto
(\Cb_C / g^\omega, \sqrt{1 - \alpha} \, \Cb_C^{[1]} / g^\omega + \sqrt{\alpha} \, \Cb_C / g^\omega, \sqrt{1 - \alpha} \, \Cb_C^{[2]} / g^\omega + \sqrt{\alpha} \, \Gb_C / g^\omega)
\end{multline}
in $\{ \ell^\infty([0,1]^d) \}^3$, where $\Phi'_C$ is defined by~\eqref{eq:Phi'}, $\Gb_n$ is defined by~\eqref{eq:Gbn}, $g$ is defined by~\eqref{eq:g}, and $\Cb_C^{[1]}$ and $\Cb_C^{[2]}$ are independent copies of $\Cb_C$ in~\eqref{eq:CbC}.
\end{lem}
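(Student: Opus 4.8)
The plan is to deduce the weighted joint convergence~\eqref{eq:intermediate} from the corresponding \emph{unweighted} joint convergence of the three underlying processes, upgrading it by verifying finite-dimensional convergence and joint asymptotic tightness separately; the tightness will be inherited, coordinate by coordinate, from the (slight extension of the) marginal weighted weak convergence of Theorem~2.2 in \cite{BerBucVol17} used in \cite{BerSeg18}. The genuinely new ingredient is not the boundary machinery but the bookkeeping that reduces the joint tightness to single-sample statements and checks that the unweighted limit already carries the right dependence.

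First I would establish the joint unweighted convergence of the triple of multivariate empirical processes. In the i.i.d.\ setting, starting from~\eqref{eq:wc1} and writing
\begin{equation*}
\sqrt{b}(G_b^{[I_{m,n}]} - C) = (1-b/n)^{1/2} \{ (1-b/n)^{-1/2} \Gb_b^{[I_{m,n}]} \} + \sqrt{b/n}\, \Gb_n ,
\end{equation*}
the continuous mapping theorem together with $b/n \to \alpha$ yields
\begin{equation*}
(\Gb_n, \sqrt{b}(G_b^{[I_{1,n}]} - C), \sqrt{b}(G_b^{[I_{2,n}]} - C)) \leadsto (\Gb_C, \sqrt{1-\alpha}\,\Gb_C^{[1]} + \sqrt{\alpha}\,\Gb_C, \sqrt{1-\alpha}\,\Gb_C^{[2]} + \sqrt{\alpha}\,\Gb_C)
\end{equation*}
in $\{ \ell^\infty([0,1]^d) \}^3$. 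In the s.m.\ setting with $b/n \to 0$, the same limit (with $\alpha=0$) follows from the joint weak convergence of $(\Gb_n, \Gb_b^{[I_{1,n}]}, \Gb_b^{[I_{2,n}]})$ to $(\Gb_C, \Gb_C^{[1]}, \Gb_C^{[2]})$ obtained in the proof of~\eqref{eq:sub:Cbn}, since $\sqrt{b/n}\,\Gb_n \p 0$ in $\ell^\infty([0,1]^d)$. Applying the continuous linear map $\Phi'_C$ in~\eqref{eq:Phi'} coordinatewise, and using $\Phi'_C(\Gb_C) = \Cb_C$ and $\Phi'_C(\Gb_C^{[m]}) = \Cb_C^{[m]}$, I obtain
\begin{equation*}
\bm Z_n := (\Phi'_C(\Gb_n), \Phi'_C(\sqrt{b}(G_b^{[I_{1,n}]} - C)), \Phi'_C(\sqrt{b}(G_b^{[I_{2,n}]} - C))) \leadsto \bm Z
\end{equation*}
in $\{ \ell^\infty([0,1]^d) \}^3$, where $\bm Z = (\Cb_C, \sqrt{1-\alpha}\,\Cb_C^{[1]} + \sqrt{\alpha}\,\Cb_C, \sqrt{1-\alpha}\,\Cb_C^{[2]} + \sqrt{\alpha}\,\Cb_C)$.

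It then remains to upgrade $\bm Z_n \leadsto \bm Z$ to $\bm Z_n/g^\omega \leadsto \bm Z/g^\omega$. For the finite-dimensional distributions, at any $\bm u$ with $g(\bm u) > 0$ division by $\{g(\bm u)\}^\omega$ is a fixed continuous scaling, so convergence follows from that of $\bm Z_n$; at $\bm u$ with $g(\bm u) = 0$, each coordinate of $\bm Z_n$ is the image under $\Phi'_C$ of an empirical process and vanishes on the zero-set of $g$ exactly as in \cite{BerBucVol17} (using $\dot C_j \equiv 1$ on the relevant faces and $\dot C_j \equiv 0$ where a relevant coordinate equals $0$), so the convention makes the value identically $0$, matching the limit. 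For joint asymptotic tightness in $\{ \ell^\infty([0,1]^d) \}^3$ it suffices to have tightness of each coordinate. The key observation is that each coordinate is, in distribution, a single-sample weighted process: $\Phi'_C(\Gb_n)/g^\omega = \bar \Cb_n/g^\omega$, while, since any size-$b$ subsample (without replacement in the i.i.d.\ case, a consecutive block in the s.m.\ case) has the same law as a size-$b$ sample by i.i.d.-ness / stationarity, $\sqrt{b}(G_b^{[I_{m,n}]} - C)$ is equal in distribution to $\sqrt{b}(G_b - C)$, whence $\Phi'_C(\sqrt{b}(G_b^{[I_{m,n}]} - C))/g^\omega$ is equal in distribution to $\bar \Cb_b/g^\omega$. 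The needed marginal tightness is therefore exactly the weighted weak convergence $\bar \Cb_b/g^\omega \leadsto \Cb_C/g^\omega$ (with sample size $n$, resp.\ $b_n \to \infty$), i.e.\ the extension of Theorem~2.2 in \cite{BerBucVol17} recalled after Theorem~\ref{thm:sub:Cbn:weighted}, which holds in the s.m.\ setting thanks to Condition~\ref{cond:mixing}. Combining finite-dimensional convergence with joint asymptotic tightness gives~\eqref{eq:intermediate}.

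The main obstacle is the weighted asymptotic tightness near the boundary of $[0,1]^d$, where $1/g^\omega$ blows up: this is precisely where Condition~\ref{cond:partial2:C} (and, in the time-series case, the exponential mixing of Condition~\ref{cond:mixing}) enters, through the boundary estimate of \cite{BerBucVol17} controlling $|\Phi'_C(\cdot)|$ by an appropriate power of $g$. I expect to borrow that estimate rather than reprove it; the actual work is the reduction, via the distributional identities above, of the joint (full-sample plus two subsample) weighted tightness to the single-sample statements, together with the verification that the unweighted limit $\bm Z$ already encodes the correct dependence structure $\sqrt{1-\alpha}\,\Cb_C^{[m]} + \sqrt{\alpha}\,\Cb_C$.
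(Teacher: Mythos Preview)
Your proposal is correct and follows essentially the same route as the paper: first establish the unweighted joint convergence of $(\Phi'_C(\Gb_n),\Phi'_C(\sqrt{b}(G_b^{[I_{1,n}]}-C)),\Phi'_C(\sqrt{b}(G_b^{[I_{2,n}]}-C)))$ via the continuous mapping theorem, then deduce finite-dimensional convergence of the weighted triple from it, and finally obtain marginal (hence joint) asymptotic tightness from Theorem~2.2 of \cite{BerBucVol17} by using that $\sqrt{b}(G_b^{[I_{m,n}]}-C)$ has the same law as $\Gb_b$. The paper's proof is slightly terser (it does not spell out the zero-set discussion or the s.m.\ case), but the structure and the key distributional-identity reduction are identical.
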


\begin{proof}[\bf Proof of Lemma~\ref{lem:sub:Cbn:weighted}]
We only provide the proof under the i.i.d.\ setting with $b/n \to \alpha \in [0,1)$, the proof being simpler when $b/n \to 0$. Since the assumptions are a superset of those of Theorem~\ref{thm:sub:Cbn}~(i), we can start from~\eqref{eq:wc1} and apply the continuous mapping theorem to obtain that
\begin{equation*}
(\Gb_n, \sqrt{b} (G_b^{[I_{1,n}]} - C), \sqrt{b} (G_b^{[I_{2,n}]} - C) ) \leadsto (\Gb_C,\sqrt{1 - \alpha} \, \Gb_C^{[1]} + \sqrt{\alpha} \, \Gb_C,\sqrt{1 - \alpha} \, \Gb_C^{[2]} + \sqrt{\alpha} \, \Gb_C)
\end{equation*}
in $\{ \ell^\infty([0,1]^d) \}^3$. Applying further the continuous mapping theorem with the linear map $\Phi'_C$ in~\eqref{eq:Phi'}, we obtain that
\begin{equation}
\label{eq:wc2}
\Big( \Phi'_C(\Gb_n), \Phi'_C(\sqrt{b} (G_b^{[I_{1,n}]} - C)), \Phi'_C(\sqrt{b} (G_b^{[I_{2,n}]} - C)) \Big) \leadsto (\Cb_C,\sqrt{1 - \alpha} \, \Cb_C^{[1]} + \sqrt{\alpha} \, \Cb_C,\sqrt{1 - \alpha} \, \Cb_C^{[2]} + \sqrt{\alpha} \, \Cb_C)
\end{equation}
in $\{ \ell^\infty([0,1]^d) \}^3$. The convergence of the finite-dimensional distributions in~\eqref{eq:intermediate} is then a consequence of~\eqref{eq:wc2} and the continuous mapping theorem. To show~\eqref{eq:intermediate}, it remains to prove marginal asymptotic tightness since the latter implies joint  asymptotic tightness. From Theorem~2.2 in \cite{BerBucVol17} (see also Lemma~4.9 in that reference and the discussion at the end of the proof of Theorem~2 in \cite{BerSeg18}), we have that, under the considered assumptions,
\begin{equation}
\label{eq:weighted:Cbn}
\Phi'_C(\Gb_n) / g^\omega \leadsto \Phi'_C(\Gb_C) / g^\omega = \Cb_C / g^\omega
\end{equation}
in $\ell^\infty([0,1]^d)$. Using the fact that $\sqrt{b} (G_b^{[I_{m,n}]} - C)$ and $\Gb_b = \sqrt{b} (G_b - C)$ are equal in distribution,~\eqref{eq:weighted:Cbn}, implies that
\begin{equation}
\label{eq:weighted:Cbn:2}
  \Phi'_C(\sqrt{b} (G_b^{[I_{m,n}]} - C)) / g^\omega \leadsto  \Phi'_C(\Gb_C) / g^\omega = \Cb_C / g^\omega,
\end{equation}
in $\ell^\infty([0,1]^d)$ for $m \in \{1,2\}$. Note in passing that, since, as already discussed in the proof of~\eqref{eq:sub:Cbn:fsc}, $\sqrt{1 - \alpha} \, \Gb_C^{[m]} + \sqrt{\alpha} \, \Gb_C$ and $\Gb_C$ are equal in distribution, $\Phi'_C(\sqrt{1-\alpha} \Gb_C^{[m]} + \sqrt{\alpha} \Gb_C ) = \sqrt{1-\alpha} \Cb_C^{[m]} + \sqrt{\alpha} \Cb_C$ and $\Phi'_C(\Gb_C) = \Cb_C$ are also equal in distribution. The weak convergences in~\eqref{eq:weighted:Cbn} and~\eqref{eq:weighted:Cbn:2} imply marginal asymptotic tightness of the process on the left-hand side of~\eqref{eq:intermediate} and thus the desired result.
\end{proof}

\begin{proof}[\bf Proof of Theorem~\ref{thm:sub:Cbn:weighted}]
The claims in (i) and (ii) are a consequence of Lemma~\ref{lem:sub:Cbn:weighted} and the continuous mapping theorem. The asymptotic equivalence in~\eqref{eq:weighted:tilde:ae1} follows from Theorem~2.2 in \cite{BerBucVol17}, as well as from the discussion at the end of the proof of Theorem~2 in \cite{BerSeg18}. From the same result, using the fact that $\big(\sqrt{b} ( G_b^{[I_{m,n}]} - C ), \sqrt{b} (\tilde C_b^{[I_{m,n}]} - C) \big)$ and $\big( \Gb_b = \sqrt{b} (G_b - C), \tilde \Cb_b = \sqrt{b} (\tilde C_b - C) \big)$ are equal in distribution for $m \in \{1,2\}$, we can also write
$$
\sup_{\bm u \in [0,1]^d \atop g(\bm u) \geq c/b} \left| \frac{\sqrt{b} \{ \tilde C_b^{[I_{m,n}]}(\bm u) - C(\bm u)\}}{\{ g(\bm u) \}^\omega} - \frac{\Phi'_C( \sqrt{b} ( G_b^{[I_{m,n}]} - C ) )(\bm u)}{\{ g(\bm u) \}^\omega} \right| = o_\Pr(1), \qquad m \in \{1,2\},
$$
where $\Phi'_C$ is defined by~\eqref{eq:Phi'}. Combining the previous statement with~\eqref{eq:weighted:tilde:ae1} and using the triangle inequality, we obtain~\eqref{eq:weighted:tilde:ae2}. Similarly, the asymptotic equivalences in~\eqref{eq:weighted:ae1} and~\eqref{eq:weighted:ae2} are essentially a consequence of Lemma~4.7 in~\cite{BerBucVol17}, the discussion at the end of the proof of Theorem~2 in \cite{BerSeg18} and Section~6.5 in the same reference.
\end{proof}

\begin{lem}
\label{lem:wc:Cbn:hash:weighted}
Assume that Conditions~\ref{cond:ties},~\ref{cond:partial:C},~\ref{cond:mixing} and~\ref{cond:partial2:C} hold. Then, for any $\omega \in [0,1/2)$,
$$
\Cb_n^\# / g^\omega = \bar \Cb_n / g^\omega + o_\Pr(1) \leadsto  \Cb_C / g^\omega
$$
in $\ell^\infty([0,1]^d)$, where $\Cb_C$, $g$ and $\bar \Cb_n$ are defined by~\eqref{eq:Gbn},~\eqref{eq:CbC} and~\eqref{eq:bar:Cbn}, respectively.
\end{lem}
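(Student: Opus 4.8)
The plan is to split the statement into its two halves, namely the weak limit $\bar \Cb_n / g^\omega \leadsto \Cb_C / g^\omega$ and the asymptotic equivalence $\Cb_n^\# / g^\omega = \bar \Cb_n / g^\omega + o_\Pr(1)$, both in $\ell^\infty([0,1]^d)$; chaining them yields the assertion. The first half requires no new work: comparing~\eqref{eq:bar:Cbn} with~\eqref{eq:Phi'} shows that $\bar \Cb_n = \Phi'_C(\Gb_n)$, and, since Condition~\ref{cond:mixing} places us in the s.m.\ framework (so that Condition~\ref{cond:Gbn} holds by Theorem~1 of \cite{Buc15}) while Conditions~\ref{cond:partial:C} and~\ref{cond:partial2:C} are assumed, the weighted weak convergence $\Phi'_C(\Gb_n) / g^\omega \leadsto \Cb_C / g^\omega$ is exactly~\eqref{eq:weighted:Cbn} from the proof of Lemma~\ref{lem:sub:Cbn:weighted} (itself the slight extension of Theorem~2.2 in \cite{BerBucVol17}). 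It therefore remains to prove the asymptotic equivalence, i.e.\ that $\sup_{\bm u \in [0,1]^d} |\Cb_n^\#(\bm u) - \bar \Cb_n(\bm u)| / \{g(\bm u)\}^\omega = o_\Pr(1)$.

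To this end I would fix an arbitrary $c \in (0,1)$ and split the supremum over $[0,1]^d$ according to whether $\bm u$ lies in $A_n = \{\bm u : g(\bm u) \geq c/n\}$ or in $B_n = \{\bm u : g(\bm u) < c/n\}$. On $A_n$, the triangle inequality gives $|\Cb_n^\# - \bar \Cb_n| \leq |\Cb_n^\# - \Cb_n| + |\Cb_n - \bar \Cb_n|$. For the first term, combining~\eqref{eq:ae:Cn} and~\eqref{eq:ae:Cn:hash} (both available under the assumed Condition~\ref{cond:ties}) yields the deterministic bound $\sup_{\bm u} |\Cb_n^\#(\bm u) - \Cb_n(\bm u)| = \sqrt{n}\,\sup_{\bm u}|C_n^\#(\bm u) - C_n(\bm u)| \leq 2d/\sqrt{n}$, so that, using $\{g(\bm u)\}^\omega \geq (c/n)^\omega$ on $A_n$,
\[
\sup_{\bm u \in A_n} \frac{|\Cb_n^\#(\bm u) - \Cb_n(\bm u)|}{\{g(\bm u)\}^\omega} \leq 2d\,c^{-\omega}\,n^{\omega - 1/2} \longrightarrow 0,
\]
since $\omega < 1/2$. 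The second term is $o_\Pr(1)$ by the asymptotic equivalence~\eqref{eq:weighted:ae1}. Hence $\sup_{\bm u \in A_n}|\Cb_n^\# - \bar \Cb_n|/g^\omega = o_\Pr(1)$.

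The heart of the argument is the control on $B_n$, where one must show $\sup_{\bm u \in B_n}|\Cb_n^\#|/g^\omega = o_\Pr(1)$ and $\sup_{\bm u \in B_n}|\bar \Cb_n|/g^\omega = o_\Pr(1)$ separately. For the checkerboard term I would exploit that $C_n^\#$ and $C$ are genuine copulas, hence both satisfy the Fr\'echet--Hoeffding upper bound $D(\bm u) \leq \min_j u_j$ and are $1$-Lipschitz in each coordinate. Writing $g(\bm u) = u_{j^*} \wedge \bigvee_{k \neq j^*}(1 - u_k)$ for a minimizing index $j^*$ and distinguishing the two ways the minimum can be attained: if $g(\bm u) = u_{j^*}$, then $0 \leq D(\bm u) \leq u_{j^*} = g(\bm u)$ for both copulas, so $|C_n^\#(\bm u) - C(\bm u)| \leq g(\bm u)$; if instead $g(\bm u) = \bigvee_{k \neq j^*}(1 - u_k)$, then the Lipschitz bound together with $D(\bm u^{(j^*)}) = u_{j^*}$ gives $u_{j^*} - \sum_{k \neq j^*}(1 - u_k) \leq D(\bm u) \leq u_{j^*}$ for both copulas, so $|C_n^\#(\bm u) - C(\bm u)| \leq \sum_{k \neq j^*}(1 - u_k) \leq (d-1)\,g(\bm u)$. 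In either case $|\Cb_n^\#(\bm u)| \leq (d-1)\sqrt{n}\,g(\bm u)$, whence
\[
\sup_{\bm u \in B_n} \frac{|\Cb_n^\#(\bm u)|}{\{g(\bm u)\}^\omega} \leq (d-1)\sqrt{n}\,\sup_{\bm u \in B_n} \{g(\bm u)\}^{1-\omega} \leq (d-1)\,c^{1-\omega}\,n^{\omega - 1/2} \longrightarrow 0,
\]
again because $\omega < 1/2$; note that this bound is deterministic. The term $\sup_{\bm u \in B_n}|\bar \Cb_n|/g^\omega$ is handled by the weighted weak convergence already invoked: since $\bar \Cb_n$ vanishes on the zero-set of $g$ (where either some $u_j = 0$ or $d-1$ of the coordinates equal $1$, by the convention on $\dot C_j$) and $B_n$ shrinks to that zero-set as $n \to \infty$ (because $c/n \to 0$), the contribution of this boundary strip is asymptotically negligible, which is one of the increment/tightness estimates underlying~\eqref{eq:weighted:Cbn}; see \cite{BerBucVol17}. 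Combining the $A_n$ and $B_n$ bounds gives the equivalence and completes the proof.

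I expect the $B_n$ analysis to be the main obstacle. The deterministic checkerboard bound is the genuinely new ingredient and hinges on using the Fr\'echet--Hoeffding and Lipschitz properties of a bona fide copula to beat the weight $g^\omega$ near the boundary --- the very place where the unsmoothed process $\Cb_n / g^\omega$ fails to be bounded and where~\eqref{eq:weighted:ae1} is restricted to $A_n$. The complementary control of $\bar \Cb_n / g^\omega$ on the shrinking strip $B_n$ is inherited from the weighted convergence theory of \cite{BerBucVol17}, but some care is needed to phrase it as an $o_\Pr(1)$ statement rather than mere asymptotic tightness.
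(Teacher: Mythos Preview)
Your argument is essentially the same as the paper's: split the cube according to the size of $g$, use the fact that $C_n^\#$ is a genuine copula to control $\Cb_n^\#/g^\omega$ deterministically near the boundary, use the weighted asymptotic equivalence on the interior, and invoke \cite{BerBucVol17} for the residual $\bar\Cb_n/g^\omega$ on the shrinking strip. Two minor differences are worth flagging. First, the paper splits at the threshold $n^{-\gamma}$ with $\gamma \in (1/\{2(1-\omega)\},1)$, following \cite{BerSeg18}, whereas you split at $c/n$; your choice is perfectly valid and actually a touch simpler, since $\omega<1/2$ already makes both the interior bound $n^{\omega-1/2}$ and the boundary bound $(d-1)c^{1-\omega}n^{\omega-1/2}$ vanish without introducing an auxiliary exponent. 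Second, and more importantly, your handling of $\sup_{\bm u \in B_n}|\bar\Cb_n(\bm u)|/\{g(\bm u)\}^\omega$ is the one genuinely soft spot: weak convergence of $\bar\Cb_n/g^\omega$ in $\ell^\infty([0,1]^d)$ together with the fact that $\bar\Cb_n$ vanishes on the zero-set of $g$ does not by itself yield the $o_\Pr(1)$ claim --- you need a specific increment estimate for $\bar\Cb_n/g^\omega$ on the shrinking strip. The paper closes exactly this gap by citing Lemma~4.10 of \cite{BerBucVol17}, which states precisely that the indicator $\1_{\{g \geq n^{-\gamma}\}}$ (and a fortiori $\1_{\{g \geq c/n\}}$, since $c/n \leq n^{-\gamma}$ eventually) can be dropped from $\bar\Cb_n/g^\omega$ at the cost of an $o_\Pr(1)$ term. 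You should replace your heuristic appeal to ``tightness estimates underlying~\eqref{eq:weighted:Cbn}'' by a direct citation of that lemma.
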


\begin{proof}[\bf Proof of Lemma~\ref{lem:wc:Cbn:hash:weighted}]
The proof closely follows the proof of Theorem~2 in~\cite{BerSeg18}. Fix $\gamma \in \R$ such that $1/\{2(1-\omega)\} < \gamma < 1$ and consider the abbreviation $\{g \geq n^{-\gamma} \} = \{\bm u \in [0,1]^d : g(\bm u) \geq n^{-\gamma} \}$, and similarly for $\{g < n^{-\gamma} \}$. Then, write
$$
\Cb_n^\# / g^\omega = \1_{\{g \geq n^{-\gamma}\}} \Cb_n^\# / g^\omega  + \1_{\{g < n^{-\gamma}\}} \Cb_n^\# / g^\omega.
$$
Using the fact that $C_n^\#$ is a copula almost surely, we can proceed exactly as in the proof of Lemma~8 in~\cite{BerSeg18} to show that, almost surely,
$$
\sup_{\bm u \in [0,1]^d \atop g(\bm u) \leq n^{-\gamma}} | \Cb_n^\#(\bm u) / g^\omega(\bm u) | = o(1).
$$
Furthermore, from Lemma~\ref{lem:ae:Cn:hash},
\begin{align*}
\sup_{\bm u \in [0,1]^d \atop g(\bm u) \geq n^{-\gamma}} \left| \frac{\hat \Cb_n(\bm u)}{\{ g(\bm u) \}^\omega} - \frac{\Cb_n^\#(\bm u)}{\{ g(\bm u) \}^\omega} \right| \leq \sup_{\bm u \in [0,1]^d \atop g(\bm u) \geq n^{-\gamma}} \{ g(\bm u) \}^{-\omega} \times \sqrt{n} \sup_{\bm u \in [0,1]^d} | \hat C_n(\bm u) -C_n^\#(\bm u) | \leq  d  n^{\gamma \omega - 1/2} = o(1),
\end{align*}
almost surely. Combining the three previous displays, we obtain that
$$
\Cb_n^\# / g^\omega = \1_{\{g \geq n^{-\gamma}\}}  \hat \Cb_n / g^\omega  + o(1),
$$
almost surely, which, from~\eqref{eq:weighted:tilde:ae1} and the fact that $\tilde \Cb_n = \hat \Cb_n$ under Condition~\ref{cond:ties}, gives
$$
\Cb_n^\# / g^\omega = \1_{\{g \geq n^{-\gamma}\}}  \bar \Cb_n / g^\omega  + o_\Pr(1).
$$
From Lemma~4.10 in~\cite{BerBucVol17}, the indicator function on the right-hand side can be omitted and the desired result follows from Theorem~2.2 in the same reference.
\end{proof}

\begin{proof}[\bf Proof of Theorem~\ref{thm:sub:Cbn:weighted:smooth}]
  We only prove~\eqref{eq:sub:Cbn:hash:weighted:fsc} and~\eqref{eq:sub:Cbn:beta:weighted:fsc}, 
  the proofs of the other claims being simpler. Let us start with~\eqref{eq:sub:Cbn:beta:weighted:fsc}. From the last equation in the proof of Theorem~2 in \cite{BerSeg18}, we have that, under the considered assumptions,
\begin{equation}
\label{eq:weighted:beta:Cbn}
\Cb_n^\beta / g^\omega = \Phi'_C(\Gb_n) / g^\omega + o_\Pr(1). 
\end{equation}
Using the fact that $\big(\sqrt{b} (G_b^{[I_{m,n}]} - C), \sqrt{b} (C_b^{\beta, [I_{m,n}]} - C) \big)$ and $\big(\Gb_b = \sqrt{b} (G_b- C), \Cb_b^\beta =  \sqrt{b} (C_b^{\beta} - C) \big)$ are equal in distribution for $m \in \{1,2\}$, some thought reveals that~\eqref{eq:weighted:beta:Cbn} also implies that
\begin{equation}
\label{eq:weighted:beta:Cbn:2}
\sqrt{b} (C_b^{\beta, [I_{m,n}]} - C) / g^\omega =  \Phi'_C(\sqrt{b} (G_b^{[I_{m,n}]} - C)) / g^\omega + o_\Pr(1), \qquad m \in \{1,2\}.
\end{equation}
Combining~\eqref{eq:weighted:beta:Cbn} and~\eqref{eq:weighted:beta:Cbn:2} with~\eqref{eq:intermediate}, we obtain that
\begin{multline*}
(\Cb_n^\beta / g^\omega,\sqrt{b} (C_b^{\beta, [I_{1,n}]} - C) / g^\omega, \sqrt{b} (C_b^{\beta, [I_{2,n}]} - C) / g^\omega ) \\ \leadsto (\Cb_C / g^\omega, \sqrt{1 - \alpha} \, \Cb_C^{[1]} / g^\omega + \sqrt{\alpha} \, \Cb_C / g^\omega, \sqrt{1 - \alpha} \, \Cb_C^{[2]} / g^\omega + \sqrt{\alpha} \, \Cb_C / g^\omega)
\end{multline*}
in $\{ \ell^\infty([0,1]^d) \}^3$. The weak convergence in~\eqref{eq:sub:Cbn:beta:weighted:fsc} is finally mostly a consequence of the continuous mapping theorem. For the proof of~\eqref{eq:sub:Cbn:hash:weighted:fsc}, it suffices to start from Lemma~\ref{lem:wc:Cbn:hash:weighted} instead of~\eqref{eq:weighted:beta:Cbn}.
\end{proof}

\begin{proof}[\bf Proof of Corollary~\ref{cor:sub:Abn}]
  We only prove~\eqref{eq:sub:Abn:hash} as the proof of~\eqref{eq:sub:Abn:beta} is similar. Let $\omega \in (0,1/2)$ and let $\mu_\omega$ be the map from $\ell^\infty([0,1]^d)$ to $\ell^\infty(\Delta_{d-1})$ defined, for any $f \in \ell^\infty([0,1]^d)$ and $\bm w \in \Delta_{d-1}$, by
  $$
  \mu_\omega(f)(\bm w) = \int_0^1 f(u^{w_1},\dots,u^{w_d}) \{ g(u^{w_1},\dots,u^{w_d}) \}^\omega \frac{\dd u}{u \ln u},
  $$
  where $g$ is defined in~\eqref{eq:g}. As observed in \cite{BerSeg18}, since
$$
\sup_{\bm w \in \Delta_{d-1}} \left|  \int_0^1 \{ g(u^{w_1},\dots,u^{w_d}) \}^\omega \frac{\dd u}{u \ln u} \right| < \infty
$$
the map $\mu_\omega$ is continuous. Hence, from Theorem~\ref{thm:sub:Cbn:weighted:smooth} and the continuous mapping theorem,
\begin{multline}
  \label{eq:sub:wc:hash}
 \big( \mu_\omega(\Cb_n^\# / g^\omega), \mu_\omega(\Cb_{b,c}^{\#,[I_{1,n}]} / g^\omega), \mu_\omega(\Cb_{b,c}^{\#,[I_{2,n}]} / g^\omega) \big) \\ \leadsto
  \big( \mu_\omega(\Cb_C / g^\omega), \mu_\omega(\Cb_C^{[1]} / g^\omega), \mu_\omega(\Cb_C^{[2]} / g^\omega) \big) = \big( \eta(\Cb_C), \eta(\Cb_C^{[1]}), \eta(\Cb_C^{[2]}) \big),
\end{multline}
in $\{ \ell^\infty(\Delta_{d-1}) \}^3$, where, for any $f \in \ell^\infty([0,1]^d)$,
$$
\eta(f)(\bm w) = \int_0^1 f(u^{w_1},\dots,u^{w_d}) \frac{\dd u}{u \ln u}, \qquad \bm w \in \Delta_{d-1}.
$$
Furthermore, it can be verified that
$$
\mu_\omega(\Cb_n^\# / g^\omega) = \eta(\Cb_n^\#) = \sqrt{n} \{ \ln \nu(C_n^\#) - \ln \nu (C) \} = \sqrt{n} ( \ln A_n^\# - \ln A),
$$
where $\nu$ is defined in~\eqref{eq:nu}, and that, for any $m \in \{1,\dots,N_{b,n}\}$,
$$
\mu_\omega(\Cb_{b,c}^{\#,[m]} / g^\omega) = \eta(\Cb_{b,c}^{\#,[m]}) = (1-b/n)^{-1/2} \sqrt{b} \{ \ln \nu(C_b^{\#,[m]}) -  \ln \nu(C_n^\#)\}.
$$
The desired result finally follows from \eqref{eq:sub:wc:hash} and the delta method \citep[see][Theorem~3.9.4]{vanWel96} by proceeding, for instance, as in the proof of~Theorem~\ref{thm:sub:Cbn}~(i) for~\eqref{eq:sub:Cbn:fsc}.
\end{proof}

\section*{Acknowledgments}

The authors would like to thank two anonymous Referees for their constructive and insightful comments on an earlier version of this manuscript. This work was supported in part by the predoctoral grant PIFPAU17/04 of Kristina Stemikovskaya funded by the University of the Basque Country, the University of Pau and Pays de l'Adour and the Spanish Ministry of Economy, Industry and Competitiveness (TIN2016-78365-R).

\bibliographystyle{myjmva}
\bibliography{biblio}


\end{document}